\newtheorem{Claim}{Claim}
\newtheorem{theo}{Theorem}[section]
\newtheorem{lem}[theo]{Lemma}
\newtheorem{cor}[theo]{Corollary}
\newtheorem{prop}[theo]{Proposition}
\renewcommand{\a}{\alpha}
\renewcommand{\b}{\beta}
\newcommand{\vt}{\vartheta}
\newcommand{\D}{\Delta}
\newcommand{\G}{\Gamma}
\renewcommand{\k}{\kappa}
\newcommand{\DD}{\mathcal D}
\renewcommand{\l}{\lambda}
\newcommand{\EE}{\mathcal E}
\newcommand{\s}{\sigma}
\newcommand{\Si}{\Sigma}
\newcommand{\R}{\mathbb R}
\newcommand{\N}{{\mathbb N}}
\newcommand{\Z}{\mathbb Z}
\renewcommand{\P}{\mathbb P}
\renewcommand{\v}{\varphi}
\newcommand{\Om}{\Omega}
\newcommand{\CC}{\mathcal C}
\renewcommand{\O}{\mathcal O}
\newcommand{\LL}{{{\mathcal L}}}
\newcommand{\M}{\mathcal{P}}
\begin{document}
\begin{frontmatter}

\title{Regularity of the entropy for random walks on hyperbolic
groups\thanksref{T1}}
\runtitle{Entropy for hyperbolic groups}

\thankstext{T1}{Supported in part by NSF Grant DMS-08-11127.}

\begin{aug}
\author[A]{\fnms{Fran\c cois} \snm{Ledrappier}\corref{}\ead[label=e1]{fledrapp@nd.edu}}
\runauthor{F. Ledrappier}
\affiliation{University of Notre Dame and Universit\'e Paris 6}
\address[A]{Department of Mathematics\\
University of Notre Dame\\
Notre Dame, Indiana 46656\\
USA\\
and\\
LPMA, UMR CNRS 7599\\
Universit\'e Paris 6\\
Bo\^ite Courrier 188\\
4, Place Jussieu\\
75252 Paris Cedex 05\\
France\\
\printead{e1}} 
\end{aug}

\received{\smonth{10} \syear{2011}}
\revised{\smonth{2} \syear{2012}}

%
\begin{abstract}
We consider nondegenerate, finitely supported random walks on a
finitely generated Gromov hyperbolic group. We show that the entropy
and the escape rate are Lipschitz functions of the probability if the
support remains constant.
\end{abstract}

%
\begin{keyword}[class=AMS]
\kwd{60G50}
\kwd{60B15}
\end{keyword}
\begin{keyword}
\kwd{Entropy}
\kwd{hyperbolic group}
\end{keyword}

\end{frontmatter}

\section{Introduction}

This paper is an extension of~\cite{L2} to finitely generated Gromov
hyperbolic groups; see~\cite{GH} and Section~\ref{secpreliminaries}
below for the definition of hyperbolic groups. Let $p$ be a finitely
supported\vspace*{1pt} probability measure on an infinite group $G$, and define inductively,
with $ p^{(0)} $ being the Dirac measure at the identity $e$,
\[
p^{(n)} (x) = \bigl[p^{(n-1)} \star p\bigr] (x) = \sum
_{y \in G} p^{(n-1)} \bigl(xy^{-1}\bigr) p(y).
\]
Define the entropy $h_p$ and the escape rate $\ell_p^S $ by
\[
h_p:= \lim_n -\frac{1}{n} \sum
_{x\in G} p^{(n)}(x) \ln p^{(n)}(x),\qquad
\ell_p^S:= \lim_n \frac{1}{n} \sum
_{x\in G} |x| p^{(n)}(x),
\]
where \mbox{$|\cdot|$} is the word metric defined by some symmetric generating set~$S$.
The entropy $h_p$ was introduced by Avez~\cite{Av} and is related to
bounded solutions of the equation on $G$ $f(x) = \sum_{y \in G} f(x y
) p(y)$; see, for example,~\cite{KV}. Erschler and Kaimanovich have
shown that, on Gromov hyperbolic groups, the entropy and the escape
rate depend continuously on the probability $p$ with finite first
moment~\cite{EK}. Here we are looking for a stronger regularity on a
more restricted family of probability measures.
We fix a finite set $F \subset G$ such that $\bigcup_n F^n = G$, and we
consider probability measures in $\M(F)$, where $\M(F)$ is the set of
probability measures $p$ such that $p(x) >0$ if, and only if, $x \in
F$. The set $\M(F)$ is naturally identified with an open subset of the
probabilities on $F$, which is a contractible open polygonal bounded
convex domain in $\R^{|F|- 1}$. We show:
%
%
\begin{theo}\label{main} Assume $G$ is a Gromov hyperbolic group, and
$F$ is a finite subset of $G$ such that $\bigcup_n F^n = G$. Then, with
the above notation, the functions $p \mapsto h_p$ and $p \mapsto\ell
_p^S$ are Lipschitz continuous on $\M(F)$.
\end{theo}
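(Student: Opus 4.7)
My plan is to express both $h_p$ and $\ell_p^S$ as integrals on the Gromov boundary $\partial G$ against the hitting measure $\nu_p$ of the random walk, and then to show that both the integrand and the measure depend Lipschitz continuously on $p\in \M(F)$. Since $G$ is hyperbolic, the random walk driven by $p$ converges almost surely to a random point of $\partial G$, with limiting law $\nu_p$. By Ancona's theorem the Martin boundary of $(G,p)$ is identified with $\partial G$, which yields a Martin kernel $k_p(g,\xi)$, and one has Furstenberg-type identities expressing $h_p$ as the integral of $-\log k_p(g,\xi)$ against $p\otimes\nu_p$, and $\ell_p^S$ as the integral of the word-metric Busemann cocycle $\beta^S_\xi(g)$ against the same measure.

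\textbf{Regularity of the hitting measure.} The core of the argument is to show that $p\mapsto \nu_p$ is Lipschitz in an appropriate sense on $\M(F)$. Using a symbolic coding of $\partial G$ -- for instance via the Cannon automatic structure on $G$ -- I would realize $\nu_p$ as the equilibrium state of a H\"older potential $\phi_p$ built out of the Green function $\mathcal G_p(x,y):=\sum_{n\ge 0} p^{(n)}(x^{-1}y)$. The key analytic input is the uniform Ancona inequality: along any geodesic triple $(x,y,z)$,
$$\mathcal G_p(x,z)\;\asymp\;\mathcal G_p(x,y)\,\mathcal G_p(y,z),$$
with multiplicative constants that can be taken uniform on compact subsets of $\M(F)$. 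This makes $\phi_p$ uniformly H\"older in the coding. To obtain Lipschitz dependence of $\phi_p$ on $p$, I would perturb the Green function by the resolvent identity, writing $\mathcal G_{p'}-\mathcal G_p$ as a convergent series involving $(p'-p)$ convolved with powers of $\mathcal G_p$; Ancona's inequality then provides the geometric decay needed to control the resulting errors globally and uniformly on $\M(F)$.

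\textbf{Conclusion and main obstacle.} Once $p\mapsto \phi_p$ is Lipschitz in the H\"older norm, standard spectral perturbation theory for the Ruelle transfer operator $\mathcal L_{\phi_p}$ gives Lipschitz dependence of its leading eigendata, hence of $\nu_p$ and of $\log k_p(g,\cdot)$ in the relevant norms. Substituting into the boundary formulas, and using that $\beta^S_\xi(g)$ is uniformly bounded by $|g|$ on the finite support $F$, yields the Lipschitz regularity of both $h_p$ and $\ell_p^S$. The main obstacle is the uniform perturbation estimate for the Green function: passing from a local change $p\to p'$ on $F$ to a quantitative global comparison of $\mathcal G_p$ and $\mathcal G_{p'}$ requires the uniform Ancona inequality together with a careful telescoping along geodesics. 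The hypothesis that the support $F$ is fixed is precisely what provides the compactness needed to keep the spectral radius of the walk, the hyperbolicity constants, and the Ancona constants uniformly controlled.
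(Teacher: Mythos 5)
Your overall architecture (Ancona's identification of the Martin boundary, a symbolic coding, transfer operators) is the same as the paper's, but the two steps you single out as "to be done" are precisely where the difficulty lies, and as sketched both have genuine gaps.

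First, the perturbation estimate for the Martin kernel. What is needed is not a pointwise comparison of $\mathcal{G}_p$ and $\mathcal{G}_{p'}$ but a bound on $\|\Phi_p-\Phi_{p'}\|$ in the H\"older norm on $\partial G$, where $\Phi_p(\xi)=-\ln K_\xi(x)$; concretely one must prove a second-difference estimate of the form $|\Phi_p(\xi)-\Phi_{p'}(\xi)-\Phi_p(\eta)+\Phi_{p'}(\eta)|\leq C\,\rho(\xi,\eta)^{\kappa}\,d(p,p')$, uniformly in $\xi,\eta$. The resolvent identity plus the uniform Ancona inequality gives multiplicative control of Green functions along geodesics, but turning that into the H\"older-seminorm Lipschitz bound is the heart of the matter: the paper does it by a Birkhoff projective-contraction argument on cones in $\ell^t$ spaces, chaining "obstacles" along a geodesic so that the errors coming from replacing $p$ by $p'$ at the $i$-th obstacle are damped geometrically both forwards and backwards (Proposition \ref{LipMartin}, Claim 2). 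Your "careful telescoping along geodesics" is naming this step, not performing it; a route through the resolvent identity does exist in later literature, but it requires substantial extra input (relative Ancona-type inequalities for differences of Green functions), so as written this is a missing proof rather than an alternative one.

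Second, the identification of $\nu_p$ with \emph{the} equilibrium state of a single H\"older potential is not available in this generality, and the paper explicitly flags this as the obstruction: the usable coding is the Coornaert--Papadopoulos coding of the horofunction boundary $\Phi_0$, which maps onto $\partial G$ only finite-to-one and whose subshift need not be transitive. Ruelle--Perron--Frobenius theory therefore applies only on each transitive component $\Sigma_j$; the stationary measure on a minimal invariant union of components is a convex combination of the component eigenmeasures with weights determined by a linear system, and the paper never proves that $p^\infty$ itself is Lipschitz as a functional on $\G_\kappa(\partial G)$. Instead it writes $h_p$ (via the Kaimanovich--Vershik variational formula) and $\ell_p^S$ (via the Karlsson--Ledrappier formula) as \emph{maxima of finitely many} Lipschitz functions, one per minimal invariant set, which also explains why only Lipschitz (and not better) regularity is obtained. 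Relatedly, your formula for $\ell_p^S$ as an integral of a word-metric Busemann cocycle $\beta^S_\xi(g)$ over $\partial G$ is problematic: distinct horofunctions can share the same point at infinity, so this cocycle is not well defined on $\partial G$, and the paper must integrate over the Busemann boundary $\partial_B G$ (coded by the same subshift) instead. Your proposal would need both the component decomposition and this change of boundary to go through.
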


If the infinite hyperbolic group $G$ is amenable, $G$ is virtually
cyclic, and the entropy is vanishing on $\M(F)$. Moreover, it follows
from the formula in~\cite{KL} that the escape rate is Lipschitz
continuous in on $\M(F) $; see the remark after Formula (\ref{escape})
below. If $G$ is a non-Abelian free group, and $F$ a general finite
generating set, then $p \mapsto h_p$ is real analytic (\cite{L2},
Theorem 1.1) and $p \mapsto\ell_p^S $ as well~\cite{Gi}. This
holds more generally for free products; see~\cite{Gi} and~\cite{Gi2}
for the precise conditions. A general nonamenable hyperbolic group has
many common geometric features with non-Abelian free groups, and our
proof follows the scheme of~\cite{L2}. For Gromov hyperbolic groups,
Ancona~\cite{An} proved that the Martin boundary of the random walk
directed by the probability $p$ is the Gromov geometric boundary. Let
$K_\xi(x) $ be the Martin kernel associated to a point $\xi$ of the
geometric boundary. Our main technical result, Proposition
\ref{LipMartin}, uses the description of the Martin kernel by Ancona
(see also~\cite{W,INO}) to prove that $\ln K_\xi(x) $ is a
Lipschitz continuous function of $p$ as a H\"older continuous function
on the geometric boundary. Then, like in~\cite{L2}, we can express
$h_p $ in terms of the exit measure $p^ \infty$ of the random walk on
the geometric boundary $\partial G$ and the Martin kernel.
Unfortunately, it is not clear in that generality that the measure $p^
\infty$, seen as a linear functional on H\"older continuous functions
on the geometric boundary, depends smoothly on the probability in
$M(F)$. We use a symbolic representation of $\partial G$ (see
\cite{CP}) to express $p^ \infty$ as an combination of a finite
number of symbolic measures. Each of these symbolic measures depends
Lipschitz on $p$, and the entropy $h_p$ is the maximum of a finite
family of Lipschitz functions. The escape rate is expressed by an
analogous formula: it is the maximum of the integrals of the Busemann
kernel with respect of the stationary measures on the Busemann
boundary. It turns out that the Busemann boundary can be described in
terms of the same symbolic representation, and the Lipschitz regularity
of the escape rate follows. It is likely that both entropy and rate of
escape are more regular than what is obtained here, but this is what we
can prove for the moment. Observe that for $G = \Z$, $S=\{ \pm1\}$,
and $F$ a finite generating subset, the function $p \mapsto\ell_p^S =
| {\sum_F i p_i} | $ is Lipschitz continuous on $\M(F)$, but not $C^1$.
For another example in the same spirit, we recall that Mairesse and
Matheus~\cite{MM} have shown that for the braid group $B_3 =
\langle a, b
| aba=bab \rangle$ and $F = \{a, a^{-1}, b, b^{-1} \}$, $p \mapsto
\ell_p^F$ is Lipschitz, but not $C^1$ on $\M(F)$. The entropy is constant
$0$ in the case of $\Z$; the regularity of the entropy for the braid
group is unknown.

In this note, the letter $C$ stands for a real number independent of
the other variables, but which may vary from line to line. The lower
case $c_0, c_1$ will be constants which might depend only on $p \in\M
(F)$. In the same way, the letter $ \O_p$ stands for a neighborhood of
$p $ in $ \M(F)$ which may vary from line to line.

\section{Preliminaries}\label{secpreliminaries}

\subsection{Hyperbolic groups}
We first recall basic facts about hyperbolic\break groups~\cite{GH}. Let
$G$ be a finitely generated group with a symmetric finite set of
generators~$S$. Let $d(x,y) = |x^{-1}y|$ be the word metric on $G$
associated to $S$. For a subset $F\subset G$, we denote
\[
N(F, R):= \bigl\{ x \in G\dvtx d(x,F) \leq R \bigr\}
\quad\mbox{and}\quad
\partial F =
\bigl\{ x \in G\dvtx d(x,F) = 1 \bigr\}.
\]
For $x,y,z \in G$, the \textit{Gromov product} $(x|y)_z$ is defined by
the formula
\[
(x|y )_z = \tfrac{1}{2} \bigl( d(x,z) + d(y,z) - d(x,y)
\bigr).
\]
We write $(x|y)$ for $(x|y)_e$, where $e$ is the unit element. Let
$\delta>0$. The group $G$ is said to be $\delta$-hyperbolic if, for
all $x,y,z,w \in G$,
%
%
\begin{equation}
\label{hyp} (x|y)_w \geq\min\bigl\{ (x|z)_w,
(y|z)_w \bigr\} - \delta.
\end{equation}
If $G$ is $\delta$-hyperbolic, then every geodesic triangle $\D= \{\a
, \b, \gamma\}$ in $G$ is $4\delta$-slim, that is,
\[
\a\subset N(\b\cup\gamma, 4\delta),\qquad
\b\subset N(\gamma\cup\a, 4\delta),\qquad
\gamma\subset N(\a\cup\b, 4\delta).
\]

A sequence $\{x_n \}_{n\geq1}$ is said to converge to infinity if
$\lim_{n,m \to\infty} (x_n|x_m) = \infty$. Two sequences $\{x_n \}
_{n\geq1}$ and $\{y_n \}_{n\geq1}$ converging to infinity are said to
be equivalent if $\lim_{n \to\infty} (x_n|y_n) = \infty$. The
geometric boundary $\partial G$ is defined as the set of equivalence
classes of sequences converging to infinity. The Gromov product extends
to $ G \cup\partial G$ by setting
\[
(\xi|\eta) = \sup\liminf_{n,m \to\infty} (x_n|y_m),
\]
where the sup runs over all sequences $\{x_n \}_{n\geq1}$ converging
to $\xi$ and $\{y_m \}_{m\geq1}$ converging to $\eta$. Recall that
$G\cup\partial G$ is compact when equipped with the base $\{N(\{x\}
,r)\} \cup\{ V_r(\xi) \}, $ where
\[
V_r (\xi):= \bigl\{ \eta\in G \cup\partial G\dvtx (\eta|\xi) > r \bigr
\}.
\]
One can introduce a metric $\rho$ on $\partial G$ such that, for some
$a >1 $ and $C>0$,
\[
a^{-(\xi|\eta) -C} \leq\rho(\xi, \eta) \leq a^{-(\xi
|\eta) +C}.
\]

Another boundary is the Busemann boundary $\partial_BG$. Define, for
$x\in G$, the function $\Psi_{x}(z)$ on $G$ by
\[
\Phi_{x}(z) = d(x,z)-d(x,e).
\]
The assignment $x\mapsto\Psi_{x}$ is continuous, injective and takes
values in a relatively compact set of functions for the topology of
uniform convergence on compact subsets of $G$. The \textit{Busemann
compactification} $\overline{G}$ of $G$ is the closure of $G$
for that topology. The Busemann compactification $\overline G$ is a compact
$G$-space. The \textit{Busemann boundary} $\partial_B G:=\overline
{G}\setminus G$
is made of Lipschitz continuous functions $h$ on $G$ such that
$h(e)=0$, and such that the Lipschitz constant is at most 1. Moreover,
they are horofunctions in the sense of~\cite{CP}: they have the
property that for all $\l\leq h(x)$, the distance of a point $x$ to
the set $h^{-1} (\l) $ is given by $h(x) - \l$; see Section \ref
{Markov} for more about horofunctions.

\subsection{Random walks}

Let $\Xi$ be a compact space. $\Xi$ is called a $G$-space if the
group $G$ acts by continuous transformations on $\Xi$. This action
extends naturally to probability measures on $\Xi$. We say that the
measure $\nu$ on $\Xi$ is stationary if $\sum_{x \in G}( x_\ast\nu
) p(x) = \nu$. The \textit{entropy} of a stationary measure $\nu$ is
defined by
%
%
\begin{equation}
\label{entr} h_p (\Xi,\nu) = - \sum_{x \in G}
\biggl( \int_{\Xi}\ln\frac{dx^{-1}_\ast\nu}{d\nu} (\xi) \,d\nu(\xi)
\biggr) p(x).
\end{equation}
The entropy $h_p$ and the escape rate $\ell_p$ are given by
variational formulas over stationary measures (see~\cite{KV}, Section
3, for the entropy and~\cite{KL}, Theorem 18, for the escape rate)
%
%
\begin{eqnarray}
\label{entropy} h_p &=& \max\bigl\{ h_p (\Xi,\nu); \Xi
G\mbox{-space and } \nu \mbox{ stationary on } \Xi\bigr\},
\\
\label{escape} \ell_p^S & = &\max\biggl\{\sum
_{x \in G} \biggl( \int_{\overline G} h
\bigl(x^{-1}\bigr) \,d\nu(h) \biggr) p(x); \nu\mbox{ stationary on }
\overline G \biggr\}.
\end{eqnarray}
Moreover, the stationary measures in (\ref{escape}) are supported by
$\partial_B G$. In particular, in the case when $G$ is virtually
cyclic, $\partial_B G$ is finite and not reduced to a
point,\setcounter{footnote}{1}\footnote{The restriction
of each limit function to a $\Z$ coset is of the form
$\pm x + a$, where $a$ can take a finite number of values, and there is
at least one $\Z$ coset where both signs appear.} and $\ell_p^S$ is
given by the maximum of a finite number of linear functions of $p$.

Let $\Om= G^\N$ be the space of sequences of elements of $G$, $M$ the
product probability $p^\N$. The random walk is described by the
probability $\P$
on the space of paths~$\Om$, the image of $M$ by the mapping
\[
(\omega_n )_{n \in\Z} \mapsto(X_n
)_{n \geq0 }\qquad\mbox{where } X_0 = e \mbox{ and }
X_n = X_{n-1}\omega_n \mbox{ for } n>0.
\]
In particular, the distribution of $X_n$ is the convolution $p^{(n)}$.
We have:
%
%
\begin{theo}[(\cite{An}, Corollary 6.3,~\cite{K}, Theorem 7.5)]
\label{stat} There is a mapping $X_\infty\dvtx \Om\to\partial G$ such that for
$M$-a.e. $\omega$,
\[
\lim_n X_n (\omega) = X_\infty(\omega).
\]
\end{theo}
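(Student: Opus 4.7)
The plan is to prove the pathwise convergence of $X_n$ to $\partial G$ via the Gromov product criterion: I will show that for $M$-almost every $\om$ the Gromov products $(X_n(\om)\mid X_m(\om))_e$ tend to $\infty$ as $n,m\to\infty$, which by the very definition of $\partial G$ (equivalence classes of sequences converging to infinity) produces a boundary point $X_\infty(\om)$. Measurability of $X_\infty$ follows because the construction is pointwise on a full-measure set. I assume throughout that $G$ is non-elementary so that the drift is positive; the virtually cyclic case (in which the walk can be recurrent) must be analysed separately using the finite Busemann/geometric boundary.

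First I would establish the positive-drift regime. Since a non-elementary hyperbolic $G$ is non-amenable, Kesten's theorem gives a spectral radius $\rho(p)<1$, and a standard comparison yields $p^{(n)}(e)\leq C\rho^n$ (possibly along an arithmetic progression, which suffices). Combined with exponential volume growth $|B(e,R)|\leq C\lambda^R$,
\[
\P(|X_n|\leq \alpha n)\;\leq\;\sum_{|x|\leq \alpha n} p^{(n)}(x)\;\leq\; C(\lambda^\alpha \rho)^{n}
\]
for any $\alpha>0$ with $\lambda^\alpha\rho<1$, and Borel--Cantelli gives $|X_n|\geq \alpha n$ for all large $n$, $M$-a.s. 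Subadditivity $|X_{n+m}|\leq |X_n|+|X_n^{-1}X_{n+m}|$ (the second term having the law of $|X_m|$) and Kingman's theorem then yield $|X_n|/n\to \ell_p^S$ $M$-a.s.\ with $\ell_p^S\geq \alpha>0$.

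Next I would turn the identity
\[
(X_n\mid X_m)_e\;=\;\tfrac12\bigl(|X_n|+|X_m|-|X_n^{-1}X_m|\bigr)
\]
into a divergence statement. Since $X_n^{-1}X_m=\om_{n+1}\cdots\om_m$ has the same law as $X_{m-n}$ and is independent of $\mathcal F_n$, the same Kesten-plus-growth tail bound applied to $|X_n^{-1}X_m|$, together with Kingman's theorem on the shifted cocycle and an Egorov/Borel--Cantelli argument exploiting exponential decay, gives that for each $\epsilon>0$, $M$-a.s.\ eventually $|X_n|\geq (\ell-\epsilon)n$, $|X_m|\geq (\ell-\epsilon)m$, and $|X_n^{-1}X_m|\leq (\ell+\epsilon)(m-n)$ uniformly in the pair $(n,m)$. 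Substituting into the identity gives $(X_n\mid X_m)_e\geq (\ell-2\epsilon)\min(n,m)-O(\epsilon\max(n,m))$, which, for $\epsilon$ small, tends to $\infty$ along any path of pairs with $\min(n,m)\to\infty$. This is the required convergence to infinity in the Gromov sense.

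The main obstacle is the joint pathwise control in the second step: Kingman's theorem alone gives only single-parameter convergence $|X_k|(\sigma^n\om)/k\to \ell$ for each fixed $n$, and upgrading this to a simultaneous bound over all large pairs rests crucially on the exponential tail estimate provided by Kesten plus exponential volume growth of $G$ — it is essentially this estimate that fails in the elementary drift-zero case. A more conceptual alternative, and the one followed in \cite{An}, is to go through the Martin boundary: Ancona's inequality $G(x,z)\geq C^{-1}G(x,y)G(y,z)$ for $y$ on a geodesic $[x,z]$ identifies the Martin compactification with $G\cup \partial G$, after which Doob's theorem on convergence of ratios of positive $p$-harmonic functions delivers the pointwise convergence of $X_n$ in the Martin topology, i.e.\ in $\partial G$, bypassing the joint Gromov-product estimate altogether.
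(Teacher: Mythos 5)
This theorem is quoted in the paper from \cite{An} (Corollaire 6.3) and \cite{K} (Theorem 7.5); the paper gives no proof of its own, so your attempt can only be judged on its internal correctness. Your first step (Kesten's spectral radius bound plus exponential volume growth, Borel--Cantelli, then Kingman) correctly yields positive linear drift $|X_n|\geq \alpha n$ eventually a.s.\ in the non-elementary case, and you are right to exclude the virtually cyclic case, where the statement can fail for centered walks.

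The gap is in the second step, and it is fatal to the route you chose. Even granting the strongest version of what your estimates can deliver --- that a.s.\ for all large $n\leq m$ one has $|X_n|\geq(\ell-\e)n$, $|X_m|\geq(\ell-\e)m$ and $|X_n^{-1}X_m|\leq(\ell+\e)(m-n)$ --- substitution into the Gromov product identity gives
$(X_n\mid X_m)_e\geq \ell\,n-\e\,m$,
which is exactly the bound you state, and this does \emph{not} tend to infinity: for any fixed $\e>0$ it is negative as soon as $m>(\ell/\e)\,n$. Since $\e$ must be fixed before the a.s.\ event is chosen, you cannot let $\e\to 0$ depending on the pair $(n,m)$. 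First-order linear asymptotics for $|X_n|$ and for the increments are intrinsically insufficient to control $(X_n\mid X_m)$ when $m/n\to\infty$; what is needed is a statement ruling out backtracking, e.g.\ sublinear tracking of a geodesic ray (Kaimanovich's ray criterion), or an exponential-decay-of-shadows estimate of the type $\P\bigl(\exists m\geq n:\ (e\mid X_m)_{X_n}\geq R\bigr)\leq Ca^{-cR}$, which is where the exponential decay of the Green function (the paper's Corollary \ref{decayG}) enters in an essential way. (There is also a secondary issue: your uniform bound on $|X_n^{-1}X_m|$ over \emph{all} pairs cannot be obtained by a naive Borel--Cantelli, since the error probabilities $Ce^{-c(m-n)}$ are not summable over all pairs; one must treat $m-n\lesssim\log m$ separately via the trivial bound $|X_n^{-1}X_m|\leq r(m-n)$.) Your closing paragraph correctly identifies the Martin-boundary/Ancona route as a way around this, but that is precisely the cited proof rather than a repair of your argument, so as it stands the proposal does not constitute a proof.
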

The action of $G$ over itself by left multiplications extends to
$\partial G$ and makes $\partial G$ a $G$-space. The image measure
$p^\infty:= (X_\infty)_\ast M$ is the only stationary probability
measure on $\partial G$, and $(\partial G, \nu)$ achieves the maximum
in (\ref{entropy}) (\cite{K}, Theorem 7.6)
%
%
\begin{equation}
\label{entropy2} h_p = h_p \bigl(\partial G,
p^\infty\bigr) = - \sum_{x \in F} \biggl( \int
_{\partial G}\ln\frac{dx^{-1}_\ast p^\infty}{dp^\infty} (\xi) \,dp^\infty
(\xi)
\biggr) p(x).
\end{equation}

The Green function $G(x)$ associated with $(G,p) $ is defined by
\[
G(x) = \sum_{n=0}^\infty p^{(n)}(x)
\]
(see, e.g., Proposition~\ref{decay} for the convergence of the series).
For $y \in G$, the Martin kernel $K_y$ is defined by
\[
K_y (x) = \frac{G(x^{-1}y)}{G(y)}.
\]
Ancona (\cite{An}, Th\'{e}or\`{e}me 6.2) showed that $y_n \to\xi\in\partial
G$ if, and only if, the Martin kernels $K_{y_n} $ converge toward a
function $K_\xi$ called the Martin kernel at $\xi$. We have
%
%
\begin{equation}
\label{density1} \frac{dx_\ast p^\infty}{dp^\infty
}(\xi) = K_\xi(x).
\end{equation}

\subsection{Differentiability}

We are going to use formula (\ref{entropy}) and first show that the
mapping $p \mapsto-\ln K_{\xi}(x)$ is Lipschitz continuous from a
neighborhood $\O_p$ of $p$ in $\M(F)$ into a space of H\"older
continuous functions on $\partial G$.
The following properties are obtained exactly in the same way as in
\cite{L2}.

For $x,y \in G$, let $u(x,y)$ be the probability of eventually reaching
$y$ when starting from $x$. By left invariance, $u(x,y ) = u (e,
x^{-1}y) $. Moreover, by the strong Markov property, $G(x) = u(e,x)
G(e)$ so that we have
%
%
\begin{equation}
\label{Martin} K_y(x) = \frac{u(x,y)}{u(e,y)}.
\end{equation}
By definition, we have $0< u(x,y) \leq1$. The number $u(x,y) $ is
given by the sum of the probabilities of the paths going from $x$ to
$y$ which do not visit $y$ before arriving at $y$. The next two results
are classical:

%
\begin{prop}\label{decay} Let $p \in\M(F)$. There are numbers $C$
and $\zeta, 0 < \zeta< 1$ and a neighborhood $\O_p$ of $p$ in $\M
(F)$ such that for all $q \in\O_p$, all $x \in G$ and all $n \geq0$,
\[
q^{(n)} (x)\leq C \zeta^n.
\]
\end{prop}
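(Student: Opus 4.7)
The plan is to deduce the bound from Kesten's characterization of non-amenable groups combined with operator-norm estimates on $\ell^2(G)$. A non-elementary Gromov hyperbolic group is non-amenable (the virtually cyclic case being handled separately via the explicit formula of \cite{KL}), so Kesten's theorem will furnish the required spectral gap.

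First I would symmetrize: for $q \in \M(F)$, set $\check q(x) := q(x^{-1})$ and $q_s := q \star \check q$. This is a finitely supported symmetric probability on $G$, supported in the fixed finite set $F \cdot F^{-1}$, with $q_s(e) = \|q\|_2^2 > 0$, and its support generates $G$. Consider the convolution operator $A_q f := q \star f$ on $\ell^2(G)$. A direct calculation gives $A_q^* = A_{\check q}$, so the $C^*$-identity yields $\|A_q\|^2 = \|A_q A_q^*\| = \|A_{q_s}\|$, which equals the spectral radius $\rho(q_s)$ since $A_{q_s}$ is self-adjoint. By Kesten's theorem applied to the symmetric adapted measure $q_s$, non-amenability of $G$ forces $\rho(q_s) < 1$. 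Since $A_q^n = A_{q^{(n)}}$, the identity $q^{(n)}(x) = \langle A_{q^{(n)}} \delta_e, \delta_x \rangle$ gives the pointwise bound
\[
q^{(n)}(x) \;\leq\; \|A_{q^{(n)}}\| \;\leq\; \|A_q\|^n \;=\; \rho(q_s)^{n/2},
\]
so with $\zeta(q) := \rho(q_s)^{1/2} < 1$ one has $q^{(n)}(x) \leq \zeta(q)^n$ for the fixed $q$ (and all $n$, $x$).

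To upgrade this to a uniform bound on an open neighborhood $\O_p$: the coefficients of $q_s$ on $F \cdot F^{-1}$ are quadratic polynomials in $(q(x))_{x \in F}$, so $q \mapsto A_{q_s}$ is continuous in operator norm on $\ell^2(G)$ (it is a bounded operator with uniformly bounded finite support). Since the spectral radius is upper semicontinuous with respect to the operator norm, the map $q \mapsto \rho(q_s)$ is upper semicontinuous on $\M(F)$; hence the strict inequality $\rho(q_s) < 1$ persists on some open neighborhood $\O_p$ of $p$, where one may fix a common $\zeta < 1$ and a single constant $C$ (absorbing the finitely many small-$n$ terms, which depend continuously on $q$ on a relatively compact subneighborhood).

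The main (minor) obstacle is purely organizational: stitching together Kesten's characterization, the $C^*$-identity $\|A_q\|^2 = \|A_q A_q^*\|$, and upper semicontinuity of the spectral radius to extract a single triple $(C,\zeta,\O_p)$ that works uniformly. No hyperbolic geometry enters at this step; the statement is a general fact about random walks on non-amenable finitely generated groups.
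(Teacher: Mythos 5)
Your overall strategy (an $\ell^2$ spectral gap coming from non-amenability, then perturbation to a neighborhood of $p$) is the same as the paper's, which invokes Derriennic--Guivarc'h to get that the \emph{spectral radius} of $P_p$ is $<1$ and then passes to a power $n_0$ with $\|P_p^{n_0}\|_2<1$. However, your symmetrization step contains a genuine gap. The $C^*$-identity $\|A_q\|^2=\|A_{q\star\check q}\|$ controls the operator \emph{norm} of the non-self-adjoint operator $A_q$, not its spectral radius, and for Kesten's theorem to force $\|A_{q\star\check q}\|<1$ you need the support of $q\star\check q$, namely $FF^{-1}$, to generate a non-amenable subgroup. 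You assert without proof that it generates $G$; this is false in general under the standing hypothesis $\cup_n F^n=G$. Concretely, take $G=\Z/2 \ast \Z/3=\langle s,t\mid s^2=t^3=e\rangle$ (a non-elementary, hence non-amenable, hyperbolic group) and $F=\{s,ts,t^2s\}$. One checks that $\cup_n F^n=G$ (every alternating word is a product of the letters $s=s$, $t=(ts)s$, $t^2=(t^2s)s$), while $FF^{-1}=\{e,t,t^2\}=\langle t\rangle\cong\Z/3$ is finite. Then $q\star\check q$ is a probability measure on the finite group $\langle t\rangle$, so $\|A_{q\star\check q}\|=1$ and hence $\|A_q\|=1$: your final estimate degenerates to $q^{(n)}(x)\le 1$. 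The conclusion of the Proposition is nevertheless true in this example, because the relevant quantity is $\rho(A_q)=\lim_n\|A_q^n\|^{1/n}$, which is strictly smaller than $\|A_q\|$ here (indeed $q^{(2)}\star\check q^{(2)}$ is supported on a set generating the non-amenable index-two subgroup $\Z/3\ast\Z/3$, so already $\|A_q^2\|<1$).

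The repair is essentially what the paper does: one must work with the spectral radius of $A_q$ itself. Either quote Derriennic--Guivarc'h (as the paper does) for $\rho(A_p)<1$ when the walk is irreducible on a non-amenable group, or run your symmetrization on $q^{(n_0)}$ for a suitable fixed power $n_0$ for which $\mathrm{supp}\big(q^{(n_0)}\star\check q^{(n_0)}\big)=F^{n_0}(F^{n_0})^{-1}$ does generate a non-amenable (e.g.\ finite-index) subgroup; Gelfand's formula then yields $\|A_p^{n_0}\|_2<1$, and since $F^{n_0}$ is finite the map $q\mapsto A_q^{n_0}$ is norm-continuous, so the bound persists on a neighborhood $\O_p$ and the constant $C$ absorbs the intermediate powers. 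Your upper-semicontinuity remark and the pointwise bound $q^{(n)}(x)\le\|A_q^n\|$ are fine; it is only the claim $\|A_q\|<1$ itself that is not available.
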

\begin{pf} Let $q \in\M(F)$. Consider the convolution operator $P_q$
in $\ell_2 (G, \R)$, defined by
\[
P_qf(x) = \sum_{y\in F} f
\bigl(xy^{-1}\bigr) q(y).
\]
Derriennic and Guivarc'h~\cite{DG} showed that for $p \in\M(F)$,
$P_p$ has spectral radius smaller than one. In particular, there exists
$n_0$ such that the operator norm of $P_p^{n_0} $ in $\ell_2(G)$ is
smaller than one. Since $F$ and $F^{n_0}$ are finite, there is a
neighborhood $\O_p$ of $p$ in $\M(F)$ such that for all $q \in\O_p$,
$\| P_q^{n_0} \|_2 < \l$ for some $\l<1$ and $\| P_q^k \|_2 \leq
C$ for $1\leq k \leq n_0$.
It follows that for all $q \in\O_p $, all $n \geq0$,
\[
\bigl\|P_q^n \bigr\|_2 \leq C\l^{[n/n_0]}.
\]
In particular, for all $x \in G$, $q^{(n)} (x) = [P_q^n \delta_e](x)\leq
|P_q^n \delta_e|_2 \leq C \l^{[n/n_0]} | \delta_e |_2
\leq C\l^{[n/n_0]}$.
\end{pf}
%
%
\begin{cor}[(\cite{DG})]\label{decayG} Let $p \in\M(F)$. There are
numbers $C$ and $\delta>0$ such that for all $q \in\O_p$, all $x,y
\in G$,
\[
G(x,y) \leq C e^{-\delta|x^{-1}y|}.
\]
\end{cor}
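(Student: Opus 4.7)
The plan is to combine the exponential decay from Proposition \ref{decay} with the obvious support restriction that $q^{(n)}$ is supported on $F^n$. By left-invariance of the random walk $G(x,y) = G(e, x^{-1}y)$, so it suffices to prove the bound for $G(z) := G(e,z) = \sum_{n \geq 0} q^{(n)}(z)$ with $z = x^{-1}y$.

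First I would let $R := \max_{f \in F} |f|$, which is finite because $F$ is finite. Since every element in the support of $q^{(n)}$ is a product of $n$ elements of $F$, the triangle inequality for the word metric gives $q^{(n)}(z) = 0$ whenever $n < |z|/R$. Hence only indices $n \geq |z|/R$ contribute to the series defining $G(z)$.

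Next, I would apply Proposition \ref{decay} to bound $q^{(n)}(z) \leq C \zeta^n$ uniformly for $q \in \O_p$. Summing the geometric tail,
\begin{equation*}
G(z) \;\leq\; \sum_{n \geq \lceil |z|/R \rceil} C \zeta^n \;=\; \frac{C}{1-\zeta}\, \zeta^{\lceil |z|/R \rceil} \;\leq\; \frac{C}{1-\zeta}\, e^{-\delta |z|},
\end{equation*}
where $\delta := -(\ln \zeta)/R > 0$. Absorbing constants and substituting $z = x^{-1}y$ yields the stated bound $G(x,y) \leq C e^{-\delta |x^{-1}y|}$ with constants uniform on the neighborhood $\O_p$.

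There is no real obstacle here: the only point worth checking is that the constants $C$, $\zeta$ (and hence $\delta$) from Proposition \ref{decay} are uniform over $q \in \O_p$, which is exactly what was proved there, and that $R$ depends only on $F$ (not on $q$), which is immediate from $\M(F)$ having common support.
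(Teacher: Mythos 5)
Your proof is correct and follows exactly the paper's (very terse) argument: the support of $q^{(n)}$ forces $q^{(n)}(x^{-1}y)=0$ for $n$ below $|x^{-1}y|/r$ with $r=\max_{f\in F}|f|$, and summing the geometric tail from Proposition \ref{decay} gives the exponential bound with $\d = \frac{1}{r}\ln\frac{1}{\zeta}$, uniformly on $\O_p$. No further comment is needed.
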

\begin{pf}
We have $q^{(n)} (x^{-1}y ) = 0 $ for $ n \leq\frac{1}{r} |x^{-1}y|
$; take $\delta= \frac{1}{r} \ln\frac{1}{\zeta} $.
\end{pf}

Fix $p \in\M(F)$, and let $\D$ be a subset of $G$. We can define
$G_\D(x,y), u_\D(x,y) $ by considering only the paths of the random
walk which remain inside $\D$. Clearly, $G_\D\leq G, u_\D\leq u$.
For $x\in G$, $V$ a subset of $G$ and $v\in V$, let $\a_x^V(v)$ be the
probability that the first visit in $V$ of the random walk starting
from $x$ occurs at $v$ $ (\a_x^V (v) = u_{G\setminus V \cup\{v\}}
(x,v)$). We have $0 \leq\sum_{v\in V} \a_x^V (v) \leq1$ and the following:
%
%
\begin{prop}\label{cont} Fix $x$ and $V$. For all $s >1$, the mapping
$p \mapsto\a_x^V(v) $ is a $C^\infty$ function from $\M(F)$ into
$\ell^s (V)$. Moreover, $\|\frac{\partial\a_x^V}{\partial p_i}\|_s
$ is bounded independently of $x$ and $V$.
\end{prop}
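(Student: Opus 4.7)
The plan is to adapt the power-series-plus-Cauchy strategy used in \cite{L2}.

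First I would write out the path expansion
\[\a_x^V(v)\;=\;\sum_{n\ge 0}\;\sum_{\substack{\gamma=(\gamma_1,\ldots,\gamma_n)\in F^n\\ x\gamma_1\cdots\gamma_n=v\\ x\gamma_1\cdots\gamma_k\notin V \text{ for } k<n}}\prod_{k=1}^n p(\gamma_k),\]
which realises each $\a_x^V(v)$ as a power series in $(p_j)_{j\in F}$ with non-negative integer coefficients. By Proposition~\ref{decay} and continuity of the $\ell^2$ spectral radius of $P_q$ in $q$, one can pick a complex polydisk $\O_p\subset\C^F$ centred at $p$ on which the spectral radius of the walk operator $P_{|q|}$ (attached to the positive vector $(|q_j|)_{j\in F}$) remains strictly below $1$; the series then converges absolutely and uniformly on $\O_p$ and defines a holomorphic map $q\mapsto \a_x^V(v;q)$. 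This already gives $C^\infty$ regularity pointwise in $v$.

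Termwise differentiation yields the Markov-perturbation identity
\[\frac{\partial \a_x^V(v)}{\partial p_j}\;=\;\sum_{y\in G\setminus V} G_V(x,y)\,\a_{yj}^V(v),\]
with the convention $\a_z^V(v)=\delta_{z,v}$ for $z\in V$, where $G_V(x,y)=\sum_{n\ge 0}(P_p)_{G\setminus V}^n(x,y)$ is the Green kernel of the walk killed on entering $V$. Higher derivatives arise by marking several distinguished edges in the path and lead to analogous multilinear sums.

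To promote pointwise regularity into $\ell^s(V)$-regularity, the key a priori input is the probabilistic bound $\sum_{v\in V}\a_x^V(v;q)\le 1$, valid for every real $q\in\M(F)$ and every $x,V$; this gives $\|\a_x^V(\cdot;q)\|_{\ell^s(V)}\le 1$ on the real simplex. For $q\in\O_p$ complex, the termwise domination $|\a_x^V(v;q)|\le\a_x^V(v;|q|)$ together with the pointwise bound $\a_x^V(v;|q|)\le C$ supplied by the spectral-radius gap from Proposition~\ref{decay} allows one to interpolate the $\ell^1$ bound on the real slice with the complex $\ell^\infty$ bound, producing $\|\a_x^V(\cdot;q)\|_{\ell^s(V)}\le M$ uniformly on a possibly smaller $\O_p$ and for every $s>1$. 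Multi-dimensional Cauchy integration on this polydisk then gives $\|\partial^\beta\a_x^V/\partial p^\beta\|_{\ell^s(V)}\le C_\beta$ uniformly in $x$ and $V$, which is both the $C^\infty$ statement and, for $|\beta|=1$, the claimed uniform bound.

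The main obstacle is the interpolation step: the sharp $\ell^1$ bound survives only on the real probability simplex, whereas for generic $q\in\O_p$ the vector $|q|$ already lies outside the simplex, so $\sum_{v\in V}\a_x^V(v;|q|)$ need not stay bounded by $1$. One must shrink $\O_p$ so that the spectral-radius gap of Proposition~\ref{decay} absorbs the extra mass $\sum_i|q_i|-1$, making the pointwise geometric-series bound hold uniformly; convexity then combines this bound with the real-axis $\ell^1$ bound into the desired $\ell^s$ control, after which Cauchy's formula takes care of all derivatives.
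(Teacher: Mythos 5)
Your proposal is correct in substance but handles the derivatives by a genuinely different mechanism than the paper. Both arguments start from the same path expansion $\a_x^V(v)=\sum_n \a_x^{n,V}(v)$, where $\a_x^{n,V}(v)$ is the (homogeneous, degree $n$, nonnegative-coefficient) polynomial giving the probability of first entering $V$ at $v$ in exactly $n$ steps, and both rest on the same pair of bounds: the probabilistic $\ell^1$ bound $\sum_v \a_x^{n,V}(v)\le 1$ and the pointwise bound $\a_x^{n,V}(v)\le q^{(n)}(x^{-1}v)\le C\zeta^n$ from Proposition \ref{decay}, combined via $\sum_v |\a_x^{n,V}(v)|^s\le \|\a_x^{n,V}\|_\infty^{s-1}\sum_v\a_x^{n,V}(v)$. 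Where you diverge is the control of derivatives: you complexify $p$ and invoke Cauchy estimates on a polydisk, whereas the paper stays entirely real and exploits the positivity of the coefficients to get the direct termwise bound $\bigl|\frac{\partial^\alpha}{\partial p^\alpha}\a_x^{n,V}(v)\bigr|\le \bigl(n/\inf_{i\in F}p_i\bigr)^{|\alpha|}\a_x^{n,V}(v)$, after which the same $\ell^1$--$\ell^\infty$ combination applies term by term and the series of derivatives converges uniformly in $\ell^s(V)$, with constants depending only on $C$, $\zeta$ and $\inf_i p_i$, hence uniform in $x$ and $V$. Your route needs exactly the repair you identify --- off the real simplex one only has $\sum_v\a_x^{n,V}(v;|q|)\le(\sum_i|q_i|)^n\le(1+\e)^n$, so the polydisk must be shrunk until $\zeta^{s-1}(1+\e)<1$ --- and, strictly speaking, also a word on why coordinatewise holomorphy plus the uniform $\ell^s$ bound yields holomorphy of the $\ell^s(V)$-valued map (uniform $\ell^s$-convergence of the polynomial partial sums, or weak holomorphy tested against the norming set of evaluation functionals) before Cauchy's formula can be applied in the Banach space. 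With those points filled in your argument is complete and in fact proves more, namely real-analyticity of $p\mapsto\a_x^V$ in the spirit of \cite{L2}; the paper's positivity trick is more elementary and delivers the $C^\infty$ statement together with the uniform bound on $\|\partial^\alpha\a_x^V/\partial p^\alpha\|_s$ directly, but nothing beyond.
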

\begin{pf} By Proposition~\ref{decay}, there is a neighborhood $\O_p$
of $p$ in $\M(F)$ and numbers $C$, $\zeta, 0 < \zeta< 1$, such that
for $q \in\O_p$ and for all $y \in G$,
\[
q^{(n)} (y) \leq C \zeta^n.
\]
The number $\a_x^V(v)$ can be written as the sum of the probabilities
$\a_x^{n,V} (v) $ of entering $V$ at $v$ in exactly $n$ steps. We have
\[
\a_x^{n,V} (v) \leq q^{(n)} \bigl(x^{-1}v
\bigr) \leq C \zeta^n.
\]
Moreover, the function $p \mapsto\a_x^{n,V} (v) $ is a homogeneous
polynomial of degree $n$ on~$\M(F)$, since
\[
\a_x^{n,V} (v) = \sum_\EE
q_{i_1}q_{i_2}\cdots q_{i_n},
\]
where $\EE$ is the set of paths $\{ x, xi_1, xi_1i_2, \ldots, xi_1i_2
\cdots i_n = v \}$ of length $n$ made of steps in $F$ which start from
$x$ and enter $V$ in $v$. It follows that for all $\alpha= \{ n_1,
n_2, \ldots, n_{|B|}, n_i \in\N\cup\{0\} \}$, all $v\in V$,
\[
\biggl|\frac{\partial^\alpha}{\partial p^\alpha} \a_x^{n,V} (v) \biggr| \leq
\frac{ n^{|\alpha|}}{(\inf_{i \in F} p_i)^{|\alpha|}} \a_x^{n,V} (v)
\leq\frac{C n^{|\alpha|}}{(\inf_{i \in F}
p_i)^{|\alpha|}}
\zeta^n,
\]
where $|\a| = \sum_{i \in F} n_i$. Therefore,
\[
\sum_{v \in V} \biggl|\frac{\partial^\alpha}{\partial p^\alpha}
\a_x^{n,V} (v) \biggr|^s \leq\frac{C n^{s|\alpha|}}{(\inf_{i \in F}
p_i)^{s|\alpha|}}
\zeta^{(s-1)n} \sum_{v \in V} \a_x^{n,V}
(v) \leq\frac{C n^{s|\alpha|}}{(\inf_{i \in F} p_i)^{s|\alpha|}} \zeta
^{(s-1)n}.
\]
Thus, $q \mapsto\frac{\partial^\alpha}{\partial p^\alpha} \a_x^V(v)$ is
given locally by a uniformly converging series in $\ell^s
(V)$ of derivatives. It follows that $q \mapsto\a_x^V(v)$ is a
$C^\infty$ function from $\M(F)$ into $\ell^s(V)$. From the above
computation, it follows that $\| \frac{\partial^\alpha}{\partial
p^\alpha} \a_x^V(v) \|_s \leq\sum_n \frac{C n^{s|\alpha|}}{(\inf_{i \in
F} p_i)^{s|\alpha|}} \*\zeta^{(s-1)n}$, independently of $x,V$.
\end{pf}

%
\begin{prop}\label{cont*} There exists $T$ large enough that for $t
>T$, for any $y$ and~$V$, the mapping\vspace*{1pt} $p \mapsto\a_v^{\{y\}}(y) $ is
a $C^\infty$ function from $\M(F)$ into $\ell^t (V)$. Moreover, $v
\mapsto\frac{\partial\a_v^{\{y\}}(y) }{\partial p_i} $ is bounded
in $\ell^t(V)$ independently of $y$ and $V$.
\end{prop}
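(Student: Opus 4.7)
The plan is to proceed exactly as in the proof of Proposition \ref{cont}, decomposing
\[ \a_v^{\{y\}}(y) \;=\; \sum_{n \geq 1} \a_v^{n,\{y\}}(y), \]
where $\a_v^{n,\{y\}}(y)$ is the probability that the random walk starting from $v$ reaches $y$ for the first time at step $n$. As in that proof, $\a_v^{n,\{y\}}(y)$ is a homogeneous polynomial of degree $n$ in the coordinates $\{p_i\}_{i \in F}$ with non-negative coefficients (a sum over the admissible length-$n$ paths from $v$ to $y$ avoiding $y$ until the last step), and Proposition \ref{decay} gives the uniform pointwise bound $\a_v^{n,\{y\}}(y) \leq q^{(n)}(v^{-1}y) \leq C\zeta^n$ on a neighbourhood $\O_p$.

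The one change compared to Proposition \ref{cont} is that the index $v$ now runs over starting points rather than endpoints. Fortunately, translation invariance still gives a global mass bound: $\sum_{v \in G} \a_v^{n,\{y\}}(y) \leq \sum_{v \in G} q^{(n)}(v^{-1}y) = 1$. Combining this with the uniform pointwise bound,
\[ \sum_{v \in V} \a_v^{n,\{y\}}(y)^t \;\leq\; (C\zeta^n)^{t-1} \sum_{v \in G} \a_v^{n,\{y\}}(y) \;\leq\; (C\zeta^n)^{t-1}, \]
uniformly in $y$ and $V$. The same homogeneity argument used in Proposition \ref{cont} then gives
\[ \Bigl|\frac{\partial^\alpha \a_v^{n,\{y\}}(y)}{\partial p^\alpha}\Bigr| \;\leq\; \frac{n^{|\alpha|}}{(\inf_i p_i)^{|\alpha|}} \a_v^{n,\{y\}}(y), \]
so that
\[ \Bigl\|\frac{\partial^\alpha \a^{n,\{y\}}}{\partial p^\alpha}\Bigr\|_{\ell^t(V)} \;\leq\; \frac{C n^{|\alpha|}}{(\inf_i p_i)^{|\alpha|}} \, \zeta^{n(t-1)/t}. \]

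Choosing $T$ large enough that $\zeta^{(t-1)/t}$ is small enough to absorb the polynomial factor $n^{|\alpha|}$ (any $t>1$ already suffices since $\zeta<1$), the series for $\a^{\{y\}}$ and for each of its $p$-derivatives converges absolutely in $\ell^t(V)$, uniformly in $y$ and $V$. This gives both that $p \mapsto \a_v^{\{y\}}(y)$ is $C^\infty$ from $\M(F)$ into $\ell^t(V)$, and that the first derivative is bounded in $\ell^t(V)$ independently of $y$ and $V$. The main point to get right is the uniformity over both $V$ and $y$: the naive estimate $\sum_v u(v,y)^t \leq C \sum_w e^{-t\d|w|}$ coming from Corollary \ref{decayG} would require $t\d$ to exceed the exponential growth rate of $G$, whereas the exchange $(\a_v^{n,\{y\}})^t \leq (C\zeta^n)^{t-1} \a_v^{n,\{y\}}$ combined with the total mass bound $\sum_v \a_v^{n,\{y\}} \leq 1$ bypasses the growth function of $G$ entirely.
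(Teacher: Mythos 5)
Your proof is correct, but it takes a genuinely different route from the paper's. The paper reduces the statement to showing the single membership $v\mapsto \a_v^{\{y\}}(y)\in\ell^T(V)$ (after which it invokes verbatim the argument of Proposition \ref{cont}), and for that membership it uses precisely the estimate you dismiss as naive: it introduces the reversed walk $\check p(x)=p(x^{-1})$, observes that $G(v,y)=\check G(y,v)$ since both are sums of the same path probabilities, bounds $\a_v^{\{y\}}(y)\le G(v,y)=\check G(y,v)\le Ce^{-\check\d\,|y^{-1}v|}$ via Corollary \ref{decayG} applied to $\check p$, and then sums this against the exponential growth of $G$ --- which is exactly why the threshold $T$ must exceed the ratio of the growth rate to $\check\d$. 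Your level-by-level decomposition, with the pointwise bound $\a_v^{n,\{y\}}(y)\le q^{(n)}(v^{-1}y)\le C\zeta^n$ from Proposition \ref{decay}, the translation-invariance mass bound $\sum_{v\in G}\a_v^{n,\{y\}}(y)\le\sum_{v\in G}q^{(n)}(v^{-1}y)=1$, and the interpolation $a^t\le A^{t-1}a$, replaces both the time reversal and the growth estimate, and it yields the conclusion for \emph{every} $t>1$ rather than only above a threshold depending on $\check\d$ and the growth rate. Both arguments are sound and both suffice for the later applications (which only require some finite $T$); yours is sharper and bypasses the geometry of $G$ entirely, while the paper's is shorter once Corollary \ref{decayG} is in hand. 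The one point worth making explicit in your write-up is that "any $t>1$ suffices" means the proposition's "$T$ large enough" can in fact be taken to be $T=1$, which is of course consistent with the statement as formulated.
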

\begin{pf} It suffices to show that there is $T$ such that the function
$v \mapsto\a_v^{\{y\}}(y) \in\ell^T (V)$ and to apply the same
arguments as in the proof of Proposition~\ref{cont}. Consider the
probability $\check{p} $ with support\vspace*{1pt} $F^{-1}$ defined by
$\check{p} (x) = p(x^{-1})$, and define all quantities $(\check
{p})^{(n)}, \check{G} (x), \check{u}(x,y)$. Observe that, since it is
the sum of the same probabilities over the same set of paths, $ G (v,y)
= \check G(y,v)$. Therefore, we have, using Corollary \ref {decayG} for
the $\check p$ random walk,
\[
\a_v^{\{y\}}(y) \leq G(v,y) = \check G(y,v) \leq
e^{-\check\delta|y^{-1}v|}.
\]
The group $G$ has exponential growth: there is a $v$ such that there
are less than $Ce^{vR}$ elements of $G$ at distance less than $R$ from
$y$. It follows that for $T > v/\check\delta$, the function $v
\mapsto\a_v^{\{y\}}(y) \in\ell^T (V)$.
\end{pf}

\subsection{Projective contractions on cones}

In this subsection, we recall the Birkhoff theorem about linear maps
preserving convex cones. Let $\CC$ be a convex cone in a Banach space,
and define on $\CC$ the projective distance between half lines as
\[
\vt(f,g):= \ln\bigl[\tau(f,g) \tau(g,f)\bigr],
\]
where $\tau(f,g):= \inf\{ s, s>0, sf-g \in\CC\}$. Let $\DD$ be
the space of directions in~$\CC$. Then, $\vt$ defines a distance on
$\DD$. Let $A$ be an operator from $\CC$ into $\CC$, and let $T\dvtx
\DD\to\DD$ be the projective action of $A$. Then, by~\cite{B},
%
%
\begin{equation}
\label{contr} \vt(Tf,Tg) \leq\b\vt(f,g)\qquad \mbox{where } \b= \tanh
\bigl(\tfrac{1}{4} \operatorname{Diam}T(\DD) \bigr).
\end{equation}

In some cases, $\vt$-diameters are easy to estimate: for example, in
$\CC^t= \{ f \in\ell^t; f \geq0 \} $, the set $ \mathcal U (g,c):=
\{ f\dvtx c^{-1} g \leq f \leq cg \}$, where $g\in\CC$ and $c \geq1$,
has $\vt$-diameter $4 \ln c$. Moreover, the following observation is useful:
%
%
\begin{lem}[(\cite{Li}, Lemma 1.3)]\label{Liverani} Let $f,g \in\CC^t, \|
f \|_t = \|g\|_t $. Then,
\[
\|f-g\|_t \leq\bigl(e^{\vt(f,g)} -1 \bigr) \|f\|_t.
\]
\end{lem}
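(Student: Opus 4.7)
The plan is to unpack the definition of the Hilbert projective metric $\vt$ on the positive cone $\CC^t \subset \ell^t$ pointwise, and then turn that pointwise comparison into the desired norm bound. Set $a := \tau(g,f)$ and $b := \tau(f,g)$, so that by definition $ag - f \in \CC^t$ and $bf - g \in \CC^t$, i.e., pointwise
\begin{equation*}
\tfrac{1}{b}\, g(x) \;\leq\; f(x) \;\leq\; a\, g(x) \qquad \text{for all } x,
\end{equation*}
and $e^{\vt(f,g)} = ab$.

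Next I would use the hypothesis $\|f\|_t = \|g\|_t$ to pin down that $a,b \geq 1$: taking $\ell^t$-norms in $f \leq a g$ gives $\|f\|_t \leq a \|g\|_t$, hence $a \geq 1$, and symmetrically $b \geq 1$.

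Now for each coordinate $x$, split into two cases. If $f(x) \geq g(x)$, then $0 \leq f(x) - g(x) \leq (a-1)\, g(x)$. If $f(x) \leq g(x)$, then $0 \leq g(x) - f(x) \leq (1 - 1/b)\, g(x)$. Thus
\begin{equation*}
|f(x) - g(x)| \;\leq\; \max\!\left(a-1,\; 1 - \tfrac{1}{b}\right) g(x).
\end{equation*}
The (elementary) inequality I need is $\max(a-1,\, 1 - 1/b) \leq ab - 1$ whenever $a, b \geq 1$. For the first term this is immediate. For the second, $1 - 1/b \leq ab - 1$ amounts to $ab^2 \geq 2b - 1$, which follows from $ab^2 \geq b^2 \geq 2b - 1$ (the last being $(b-1)^2 \geq 0$). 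So pointwise $|f - g| \leq (ab - 1)\, g$.

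Finally, taking $\ell^t$-norms in this pointwise estimate and using $\|g\|_t = \|f\|_t$ yields
\begin{equation*}
\|f - g\|_t \;\leq\; (ab - 1)\, \|g\|_t \;=\; \big(e^{\vt(f,g)} - 1\big) \|f\|_t,
\end{equation*}
which is the claim. There is no serious obstacle here: the content of the lemma is that the abstract projective metric on the cone $\CC^t$ coincides (up to the exponential) with a concrete multiplicative pointwise ratio, and once that is made explicit the inequality reduces to the routine estimate $\max(a-1, 1-1/b) \leq ab - 1$ discussed above.
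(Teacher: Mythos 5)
Your proof is correct. The paper itself gives no argument for this lemma -- it simply cites Liverani's Lemma 1.3 -- so there is nothing to diverge from; your write-up supplies the standard proof of that cited result, specialized to the coordinatewise cone $\CC^t$ where the key abstract hypothesis (the norm being ``adapted'' to the cone, i.e.\ monotone) holds automatically. All steps check out: the identifications $f \leq \tau(g,f)\,g$ and $g \leq \tau(f,g)\,f$, the deduction $a,b\geq 1$ from $\|f\|_t=\|g\|_t$, and the elementary inequality $\max(a-1,\,1-1/b)\leq ab-1$ for $a,b\geq 1$. (Liverani's own argument reaches the same conclusion via the symmetric sandwich $-(ab-1)g \leq f-g \leq (ab-1)g$ rather than a case split, but this is a cosmetic difference.)
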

%

\section{Obstacles}\label{secMartin}

In this section, we show that the function $\Phi$ on $\partial G$
defined by $\Phi(\xi):= -\ln K_{\xi}(x)$ is H\"older continuous for
any fixed $x \in G$. This is not a new result~\cite{INO}.
Nevertheless, we present the construction and the proof in order to
introduce the notation used in the next section to show that $\Phi$
is Lipschitz in $p$ as a H\"older continuous function on $\partial G$.
Like in~\cite{INO}, the proof is based on Ancona's Harnack inequality
at infinity (see~\cite{An} and~\cite{INO}, Proposition 2.1, for the form
used here): there exist a number $R$ and a constant $c= c(p)$ such that
if $[x,y]$ is a geodesic segment and $z \in[x,y]$, then for any $\D
\subset G$, $N([x,y], R) \subset\D$, we have
%
%
\begin{equation}
\label{Harnack} c^{-1} u_\D(x,z) u_\D(z,y)
\leq u_\D(x,y) \leq c u_\D(x,z) u_\D(z,y),
\end{equation}
where $u_\D(v,w)$ is the probability of ever arriving at $w$ starting
from $v$ before reaching $G\setminus\D$. Moreover, from the proof of
(\ref{Harnack}) in~\cite{INO} or~\cite{W}, it follows that there
exists a neighborhood $\O$ of $p$ in $\M(F)$ and a constant $C$ such
that $c(p) \leq C$ for $p \in\O$.

\subsection{Obstacles}

Without loss of generality, we may assume that $F$ contains the set of
generators, and $\delta$ is an integer. Set $r = \max\{ |x|; x \in F,
\delta\}$.

Fix $M$ large. In particular, $M \geq R + 12 r $, where $R$ is given by
(\ref{Harnack}). For a geodesic~$\gamma$, we call an \textit{obstacle} a
family $U_0^-\subset U_0 \subset U_1^- \subset U_1$ of subsets of $G$
such that
\begin{eqnarray*}
U_0^- &=& \bigl\{ x \in G\dvtx d\bigl(x, \gamma(-2M) \bigr) < d\bigl(x,
\gamma(0) \bigr)\bigr\},
\\
U_0 & =& \bigl\{ x \in G\dvtx d\bigl(x, \gamma(-2M) \bigr) < d\bigl(x,
\gamma(4r)\bigr) \bigr\},
\\
U_1^- & = & \bigl\{ x \in G\dvtx d\bigl(x, \gamma(0) \bigr) < d\bigl(x,
\gamma(2M)\bigr) \bigr\},
\\
U_1 &= & \bigl\{ x \in G\dvtx d\bigl(x, \gamma(0) \bigr) < d\bigl(x,
\gamma(2M+ 4r)\bigr)\bigr\}.
\end{eqnarray*}
The subsets $U_i^\pm$ are connected and satisfy
$U_0^-\subset U_0 \subset U_1^- \subset U_1$. More precisely, we have
the two following elementary facts:
%
%
\begin{lem}\label{connexe} If $x \in U_0^- $ and $[x,\gamma(-2M)] $
is a geodesic segment, then $[x,\gamma(-2M)] \subset U_0^-$.
\end{lem}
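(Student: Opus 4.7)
The plan is a direct application of the triangle inequality and the defining geodesic equality, with no recourse to hyperbolicity needed for this particular statement.

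Let $y$ be an arbitrary point on the geodesic segment $[x, \gamma(-2M)]$. Because $y$ lies on a geodesic joining $x$ to $\gamma(-2M)$, one has the exact additivity
$$d(x, y) + d(y, \gamma(-2M)) \;=\; d(x, \gamma(-2M)).$$
On the other hand, the ordinary triangle inequality applied to the triple $x, y, \gamma(0)$ gives
$$d(y, \gamma(0)) \;\geq\; d(x, \gamma(0)) - d(x, y).$$

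Now invoke the assumption $x \in U_0^-$, which by definition means $d(x, \gamma(-2M)) < d(x, \gamma(0))$. Subtracting $d(x,y)$ from both sides of this strict inequality (a harmless operation since both quantities are integers in the word metric, so strictness is preserved), and chaining with the two displays above, we obtain
$$d(y, \gamma(0)) \;\geq\; d(x, \gamma(0)) - d(x, y) \;>\; d(x, \gamma(-2M)) - d(x, y) \;=\; d(y, \gamma(-2M)),$$
which is exactly the condition $y \in U_0^-$.

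Since $y$ was an arbitrary point of the geodesic, this gives $[x, \gamma(-2M)] \subset U_0^-$. There is no real obstacle here: the argument uses only the metric definition of $U_0^-$ and the defining property of a geodesic, so no hyperbolicity, no slim-triangle argument, and no quantitative constant (such as $\delta$, $R$, or $M$) enters the proof.
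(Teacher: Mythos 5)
Your proof is correct and is essentially the same argument as the paper's: both rest only on the geodesic additivity $d(x,y)+d(y,\gamma(-2M))=d(x,\gamma(-2M))$ and the triangle inequality for $\gamma(0)$, the paper merely phrasing it as a contradiction where you argue directly. (The aside about integrality is unnecessary—subtracting the same quantity from both sides of a strict inequality always preserves strictness—but it is harmless.)
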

\begin{pf} Assume not. Then there is a $z \in[x,\gamma(-2M)] $ such
that\break $ d(z, \gamma(-2M)) \geq d(z,\gamma(0))$. Adding $d(z,x) $ to
both sides of this inequality, we obtain
\[
d\bigl(x, \gamma(-2M)\bigr)= d(x,z) + d\bigl(z, \gamma(-2M)\bigr) \geq
d(x,z ) +
d\bigl(z,\gamma(0)\bigr) \geq d\bigl(x, \gamma(0)\bigr),
\]
a contradiction to $x \in U_0^-$.
\end{pf}
The statements and the proofs are the same for all $U_i^\pm$.
%
%
\begin{lem}\label{geomofobstacles}
If $x \in U_0^-$, then $B(x,r) \subset U_0$; if $x \in U_0$, then $B(x,
M -3r ) \subset U_1^-$.
\end{lem}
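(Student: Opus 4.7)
The plan is to prove both claims by reading a quantitative distance excess off the $4$-point condition (\ref{hyp}) applied to $x$ together with two suitable points on $\g$, based at $\g(0)$.

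For the first claim, let $x\in U_0^-$. Set $P_1=(x|\g(-2M))_{\g(0)}$, $P_2=(x|\g(4r))_{\g(0)}$ and $P_3=(\g(-2M)|\g(4r))_{\g(0)}=0$; the vanishing of $P_3$ is because $\g$ is a geodesic through $\g(0)$, and the hypothesis $d(x,\g(-2M))<d(x,\g(0))$ unwinds to $P_1>M$. Applying (\ref{hyp}) in the form $P_3\geq\min(P_1,P_2)-\d$ forces $\min(P_1,P_2)\leq\d$; since $P_1>M>\d$ (because $M\geq R+12r$ and $r\geq\d$), this gives $P_2\leq\d$, which unwinds to the quantitative excess $d(x,\g(4r))\geq d(x,\g(0))+4r-2\d$. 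Combining this with the strict integer inequality $d(x,\g(0))\geq d(x,\g(-2M))+1$ and applying the triangle inequality to $y\in B(x,r)$ produces $d(y,\g(4r))-d(y,\g(-2M))\geq 2r-2\d+1$, which is positive because $r\geq\d$, so $y\in U_0$.

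The second claim is argued identically, but with the triple $(x,\g(-2M),\g(2M))$ still based at $\g(0)$. The hypothesis $x\in U_0$ combined with the $1$-Lipschitz bound $d(x,\g(4r))\leq d(x,\g(0))+4r$ gives $(x|\g(-2M))_{\g(0)}>M-2r$, which exceeds $\d$ because $M\geq R+12r$ and $r\geq\d$. Then (\ref{hyp}) with $(\g(-2M)|\g(2M))_{\g(0)}=0$ yields $(x|\g(2M))_{\g(0)}\leq\d$, hence $d(x,\g(2M))\geq d(x,\g(0))+2M-2\d$. The triangle inequality on $B(x,M-3r)$ then gives $d(y,\g(2M))-d(y,\g(0))\geq 6r-2\d>0$, establishing $B(x,M-3r)\subset U_1^-$.

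There is no serious obstacle beyond keeping track of which of the three Gromov products is being pinned down: the key observation is that for a geodesic passing through the base point, the Gromov product of two points of the geodesic on opposite sides vanishes, so the $4$-point condition immediately converts a large Gromov product of $x$ with one endpoint into a small Gromov product of $x$ with the other, and the $\d$ loss is absorbed by the fact that $r\geq\d$ and $M\geq R+12r$.
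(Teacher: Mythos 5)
Your proof is correct and follows essentially the same route as the paper: both apply the four-point condition (\ref{hyp}) to the quadruple $\{x,\g(-2M),\g(0),\g(4r)\}$ (resp.\ $\{x,\g(-2M),\g(0),\g(2M)\}$) to convert the hypothesis into the excess $d(x,\g(4r))\geq d(x,\g(0))+4r-2\d$ (resp.\ $d(x,\g(2M))\geq d(x,\g(0))+2M-2\d$), and then conclude by the triangle inequality. The only differences are cosmetic — you base the Gromov products at $\g(0)$ rather than $\g(-2M)$ and apply the inequality at $x$ before perturbing to the ball, whereas the paper applies it directly at the perturbed point.
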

\begin{pf}
Let $x \in U_0^-$ and $x' \in B(x,r)$. Writing (\ref{hyp}) with $x =
x', y= \gamma(0), z= \gamma(4r) $ and $w = \gamma(-2M) $, we get
\begin{eqnarray*}
&&
d\bigl(x',\gamma(-2M)\bigr) -d\bigl(x',\gamma(0)
\bigr) + 2M \\
&&\qquad\geq\min\bigl\{ d\bigl(x',\gamma(-2M)\bigr) -d
\bigl(x',\gamma(4r)\bigr) + 2M +4r, 4M \bigr\} -2\delta.
\end{eqnarray*}
Since $ d(x',\gamma(-2M)) -d(x',\gamma(4r)) < d(x,\gamma(-2M))+r
-d(x,\gamma(0)) +5r < 6r \leq2M -4r$, we get
\begin{eqnarray*}
d\bigl(x', \gamma(4r)\bigr) &\geq& d\bigl(x',
\gamma(0)\bigr) +4r - 2\delta > d\bigl(x, \gamma(0)\bigr) +r \\
&>& d\bigl(x,
\gamma(-2M)\bigr) + r > d\bigl(x', \gamma(-2M)\bigr).
\end{eqnarray*}
Analogously, if $x \in U_0 $ and $x' \in B(x, M-3r)$, we get, writing
now (\ref{hyp}) with $ z= \gamma(2M) $,
\begin{eqnarray*}
&&
d\bigl(x',\gamma(-2M)\bigr) -d\bigl(x',\gamma(0)
\bigr) + 2M \\
&&\qquad\geq\min\bigl\{ d\bigl(x',\gamma(-2M)\bigr) -d
\bigl(x',\gamma(2M)\bigr) + 4M, 4M \bigr\} -2\delta.
\end{eqnarray*}
Since the right-hand side is smaller than $4M -2r$, it cannot exceed
$4M - 2\delta$, and we get
\[
d\bigl(x', \gamma(0) \bigr) \leq d\bigl(x',
\gamma(2M) \bigr) -2M + 2\delta< d\bigl(x', \gamma(2M)\bigr).
\]
\upqed\end{pf}

Lemma~\ref{geomofobstacles} implies that
any trajectory of the random walk going from $U_0^-$ to $G \setminus
U_1$ has to cross successively $U_0 \setminus U_0^-, U_1^-\setminus
U_0$ and $U_1 \setminus U_1^-$. For $V_1,V_2$ subsets of $G$, denote
$A_{V_1}^{V_2}$ the (infinite) matrix such that the row vectors indexed
by $v \in V_1$ are the $\a_v^{V_2} (w), w \in V_2$. In particular, if
$V_2 = \{y\}$, set $\omega_{V_1}^y $ for the (column) vector
\[
\omega_{V_1}^y = A_{V_1}^{\{y\}} =
\bigl(\a_v^{\{y\}} (y)\bigr)_{v\in V_1} = \bigl(u(v,y)
\bigr)_{v\in V_1}.
\]
Fix $t >T$. By Propositions~\ref{cont} and~\ref{cont*}, $\omega
_{V_1}^y$ is a vector in $\ell^t (V_1)$ and $\a_x^{V_0} \in\ell
^s(V_0)$, with $1/s +1/t = 1$.
With this notation, the strong Markov property yields, if $U_0^-\subset
U_0 \subset U_1^- \subset U_1$ is an obstacle and $x \in U_0^-$, $y
\notin U_1$,
\[
u(x,y) = \sum_{v_0,v_1} \a_x^{V_0}
(v_0) A_{V_0}^{V_1} (v_0,v_1)
u(v_1,y) = \bigl\langle\a_x^{V_0},A_{V_0}^{V_1}
\omega_{V_1}^y \bigr\rangle
\]
with the natural summation rules for matrices and for the ($\ell^s,
\ell^t$) coupling. All series are bounded series with nonnegative
terms, and we set \mbox{$V_i =
U_i\setminus U_i^-$}.

Observe that an obstacle is completely determined by the directing
geodesic segment $[\gamma(-2M), \ldots, \gamma(2M+4r)]$, so that
there is a finite number of possible obstacles and therefore a finite
number of spaces $\ell^t(V)$, of (infinite) matrices $A_{V_0}^{V_1}$,
of vectors $\omega_{V_1}^{z}$ and $\a_x^{V_0}$ if the distances
$d(z,\gamma(2M+4r +1))$ and $d(x,\gamma(-2M))$ are bounded.

\subsection{Properties of the matrix $A_{V_0}^{V_1}$}

Recall that the general entry of the matrix $A = A_{V_0}^{V_1}$ is
$A(v_0,v_1) $, the probability that starting from $v_0 \in V_0$, the
first visit in $V_1$ occurs at $v_1$. In particular, assume $A(v_0,v_1)
= 0$. Then, all paths from $v_0$ to $v_1$ with steps in $F$ have to
enter $V_1$ elsewhere before reaching $v_1$. Since the support $F$ of
$p$ contains the generators of the group, $A(v,v_1) = 0$ for all $v$'s
in the connected component of $v_0$ in $U_1^-$. By Lemma~\ref
{connexe}, all paths from $\gamma(0)$ to $v_1$ with steps in $F$ have
to enter $V_1$ before reaching $v_1$. Therefore this property depends
neither on $v_0 \notin U_1^-$ nor on $p \in\M(F)$. We say that
$v_1$ is active if $A(v_0,v_1) \not= 0 $. In the sequel we will call
$V_1$ the set of active elements of $U_1 \setminus U_1^-$.
We have:
%
%
\begin{prop}
Let $\gamma$ be a geodesic, $U_0^-\subset U_0 \subset U_1^- \subset
U_1$ an obstacle, $ V_0 = U_0 \setminus U_0^-$, $V_1$ the active part
of $U_1 \setminus U_1^-$. There exists a neighborhood $\O_p$ of $p $
in $\M(F)$ and a constant $c_1$ such that, for all $p \in\O_p$, all
$v_0 \in V_0, v_1 \in V_1$,
%
%
\begin{equation}
\label{product} c_1^{-1} u_{G\setminus U_1^-}
\bigl(v_0, \gamma(0)\bigr) \a_{\gamma(0)}^{V_1} (
v_1) \leq A (v_0, v_1) \leq c_1
u_{G\setminus U_1^- } \bigl(v_0, \gamma(0)\bigr) \a_{\gamma(0)}^{V_1}
( v_1).\hspace*{-25pt}
\end{equation}
\end{prop}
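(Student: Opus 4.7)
The plan is to combine the strong Markov property with Ancona's Harnack inequality (\ref{Harnack}), using $\g(0)$ as the intermediate point.

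I would first decompose $A(v_0,v_1) = \a_{v_0}^{V_1}(v_1)$ by the strong Markov property applied at the first visit to $\g(0)$. On the event that the walk from $v_0$ reaches $\g(0)$ before $V_1$, the post-$\g(0)$ piece is an independent walk starting from $\g(0)$, whose first-entry distribution on $V_1$ is $\a_{\g(0)}^{V_1}$. Writing out the two complementary events yields
\begin{equation*}
A(v_0,v_1) \;=\; u_{G\setminus V_1}(v_0,\g(0))\,\a_{\g(0)}^{V_1}(v_1) \;+\; \a_{v_0}^{V_1\cup\{\g(0)\}}(v_1),
\end{equation*}
where the second summand accounts for paths entering $V_1$ (necessarily at $v_1$) without ever visiting $\g(0)$. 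Modulo the identification of $u_{G\setminus V_1}(v_0,\g(0))$ with $u_{G\setminus U_1^-}(v_0,\g(0))$ --- which uses that $v_0,\g(0)\in U_1^-$, that $V_1\cap U_1^- = \emptyset$, and that by Lemma \ref{geomofobstacles} with $M\ge R+12r$ any path from $v_0$ to $\g(0)$ exiting $U_1^-$ must cross $V_1$ --- the first summand already gives the lower bound in (\ref{product}).

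For the upper bound I would bound the remainder $\a_{v_0}^{V_1\cup\{\g(0)\}}(v_1) = u_{\D}(v_0,v_1)$ with $\D := (G\setminus V_1\setminus\{\g(0)\})\cup\{v_1\}$ by appealing to Ancona's inequality (\ref{Harnack}). Because $v_0\in V_0$ and $v_1\in V_1$ are essentially aligned with the directing geodesic $\g$ on opposite sides of $\g(0)$, the $4\d$-slim triangle condition forces any geodesic $[v_0,v_1]$ to shadow $\g$ and pass within $O(\d)$ of $\g(0)$. The choice $M\ge R+12r$, combined with Lemma \ref{geomofobstacles}, ensures that $N([v_0,v_1],R)\subset \D$. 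Applying (\ref{Harnack}) with an intermediate point $z\in[v_0,v_1]$ a bounded distance from $\g(0)$, then iterating (\ref{Harnack}) on a bounded scale to move $z$ to $\g(0)$, yields
\begin{equation*}
\a_{v_0}^{V_1\cup\{\g(0)\}}(v_1) \;\le\; C\,u_{G\setminus V_1}(v_0,\g(0))\,\a_{\g(0)}^{V_1}(v_1),
\end{equation*}
which together with the decomposition above gives the upper bound.

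Uniformity of $c_1$ on a neighborhood $\O_p$ of $p$ follows from the uniformity of the constant in (\ref{Harnack}) noted just after that inequality. The main obstacle I expect is the geometric step: verifying that $[v_0,v_1]$ enters a bounded neighborhood of $\g(0)$ and that its $R$-tube lies in $\D$. This rests on the slim-triangle condition applied to all five defining inequalities of the obstacle, and is precisely where the value $M\ge R+12r$ is fine-tuned.
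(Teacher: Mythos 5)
Your decomposition of $A(v_0,v_1)$ at the first visit to the single point $\g(0)$ is exact, and it is a genuinely different route from the paper's: the paper introduces an intermediate set $U_1^{--}$ (defined via $\g(2M-4r)$) and decomposes $A(v_0,v_1)=\sum_{w\in U_1^-\setminus U_1^{--}}u_{\cdot}(v_0,w)\,\a_w^{V_1}(v_1)$ over the first passage through the shell $U_1^-\setminus U_1^{--}$, which the walk cannot jump over since its width exceeds the step size $r$. That version has no remainder term at all, and the whole proposition reduces to the two-sided comparison $u(v_0,w)\asymp u(v_0,\g(0))\,u(\g(0),w)$ uniformly in $w$ in the shell, proved by locating $\g(0)$ within $8\d$ of $[v_0,w]$ and applying (\ref{Harnack}) to the geodesic truncated at distance $R$ from $w$. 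Your lower bound is clean and your geometric intuition (the $O(\d)$ proximity of $[v_0,v_1]$ to $\g(0)$) is exactly the right ingredient.

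The gap is in the upper bound for the remainder. The inclusion $N([v_0,v_1],R)\subset\D$ with $\D=(G\setminus V_1\setminus\{\g(0)\})\cup\{v_1\}$ is false: the very fact you invoke --- that $[v_0,v_1]$ passes within $O(\d)$ of $\g(0)$ --- places the excluded point $\g(0)$ inside the $R$-tube of the geodesic, and near the endpoint the tube also meets $V_1\setminus\{v_1\}$. So (\ref{Harnack}) does not apply to $[v_0,v_1]$ in $\D$ as written, and this is not cosmetic: the whole difficulty of your route is precisely to convert ``reach $v_1$ while \emph{avoiding} $\g(0)$'' into a quantity proportional to ``reach $\g(0)$'', and the false inclusion is where that conversion is smuggled in. The repair is available but must be spelled out: use monotonicity of $u_\D$ in $\D$ to pass to the larger domain $(G\setminus V_1)\cup\{v_1\}$, apply (\ref{Harnack}) only to the truncated geodesic $[v_0,y_0]$ with $d(y_0,v_1)=R$, and then transfer the intermediate point $z$ to $\g(0)$ and $y_0$ back to $v_1$ by elementary finite-step Harnack estimates (cost $(\min_{i\in F}p_i)^{-O(\d+R)}$, uniform on a neighborhood $\O_p$), checking at the last step that the short connecting paths stay inside the allowed domain. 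Your phrase ``iterating (\ref{Harnack}) on a bounded scale'' points at the final transfer but does not address the domain problem, which is where the content of the argument lies; this is also exactly the step the paper's shell decomposition is designed to avoid.
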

\begin{pf} Introduce the set $U_1^{--}$, $U_1^{--} =\{ x \in G\dvtx d(x,
\gamma(0) ) < d(x,\gamma(2M -4r)) \}$. By a variant of Lemma \ref
{geomofobstacles}, we may write, for $v_0 \in V_0, v_1 \in V_1$,
\[
A(v_0,v_1) = \sum_{w \in U_1^- \setminus U_1^{--}}
u_{G\setminus
U_1^-} (v_0,w) \a_w^{V_1}
(v_1).
\]
Using that $\a_{\gamma(0)}^{V_1} (v_1) = \sum_{w \in U_1^- \setminus
U_1^{--}} u_{G\setminus U_1^-} (\gamma(0), w) \a_w^{V_1} (v_1) $, we
see that it suffices to prove that, for all $p \in\O_p$, all $v_0 \in
V_0, w \in U_1^{--} \setminus U_1^-$,
\begin{eqnarray*}
&&
c_1^{-1} u_{G\setminus U_1^-} \bigl(v_0,
\gamma(0)\bigr) u_{G\setminus U_1^-} \bigl(\gamma(0), w\bigr) \\
&&\qquad\leq
u_{G\setminus U_1^-}
(v_0, w) \leq c_1 u_{G\setminus U_1^-}
\bigl(v_0, \gamma(0)\bigr) u_{G\setminus U_1^-} \bigl(\gamma(0), w
\bigr).
\end{eqnarray*}
This will follow from a variant of (\ref{Harnack}) once we will have
located the point $\gamma(0)$ with respect to the geodesic $[v_0,w]$.

Observe that if $v_0 \in U_0$, then $d(v_0, \gamma(0) ) \geq M-3r$.
Indeed, writing that
\begin{eqnarray*}
&&
\bigl(\gamma(-2M), \gamma(4r) \bigr)_{v_0} \\
&&\qquad \geq\min\bigl\{ \bigl(
\gamma(-2M), \gamma(-M+2r)\bigr)_{v_0}, \bigl(\gamma(4r), \gamma(-M+2r)
\bigr)_{v_0} \bigr\} -\delta
\\
&&\qquad = \bigl(\gamma(-2M), \gamma(-M+2r)\bigr)_{v_0} - \delta,
\end{eqnarray*}
we get that $d(v_0, \gamma(4r) )\geq M+2r - \delta\geq M-3r $ and the
claim follows. Since, by Lemma~\ref{connexe}, the whole geodesic
$[v_0, \gamma(-M +2r)]$ lies in $U_0$, we have $d( \gamma(0),
[v_0,\gamma(-M+2r)]) \geq M-3r$. But we know that $ \gamma(0) \in N(
[v_0,\gamma(M)] \cup[v_0, \gamma(-M+2r)], 4\delta)$. It follows
that there is a point $z_1 \in[v_0,\gamma(M)]$ with $d(\gamma
(0),z_1) \leq4\delta$.
In the same way, since $w \in G \setminus U_1^{--}$, $d( \gamma
(0),[w,\gamma(M-2r)] ) \geq M-3r $ and therefore $d(z_1,[w,\gamma
(M-2r)] ) \geq M-3r - 4\delta$. It follows that there is a point $z
\in[v_0,w]$ such that $d(z, \gamma(0)) \leq d(z,z_1 ) + d(z_1,
\gamma(0)) \leq8\delta$.

Let $y_0$ be the point in $[v_0,w]$ at distance $R$ from $w$. Then
$G\setminus U_1^- $ contains $N([v_0,y_0], R)$, and the point $z$
belongs to $[v_0,y_0]$.\footnote{Since $w \notin U_1^{--}$, we have
$d(w,z) \geq d(w, \gamma(0)) -8\delta\geq M- 3r -8\delta\geq M - 11r
> R$.} So we may apply (\ref{Harnack}) to the points $v_0,z,y_0$ and
the domain $\D= G\setminus U_1^-$ to obtain, for all $p \in\O_p$,
all $v_0 \in V_0, v_1 \in V_1$,
\[
c_0^{-1} u_{G\setminus U_1^-} (v_0, z)
u_{G\setminus U_1^-} (z, y_0) \leq u_{G\setminus U_1^-} (v_0,
y_0) \leq c_0 u_{G\setminus U_1^-} (v_0, z)
u_{G\setminus U_1^-} (z, y_0).
\]
By changing the constant, we can replace $y_0$ by $w$ [since $ d(w,y_0
) = R $] and $z$ by $\gamma(0)$ [since $d(z, \gamma(0)) \leq8 \delta
$]. We obtain the desired inequality.
\end{pf}

For $V$ a subset of $G$, $t>0$, denote $\CC_V^t$ the convex cone of
nonnegative sequences in $\ell^t (V)$ and define on $\CC_V^t$ the
projective distance between half lines as
\[
\vt(f,g):= \ln\bigl[\tau(f,g) \tau(g,f)\bigr],
\]
where $\tau(f,g):= \inf\{ s, s>0, sf-g \in\CC_V^t \}$. Represent
the space of directions as the sector of the unit sphere $ \DD_V^t =
\CC_V^t\cap S_V^t $; then, $\vt$ defines a distance on $\DD_V^t$ for
which $\DD_V^t$ is a complete space (Lemma~\ref{Liverani}). We fix
$t>T $ such that the sequences $ \a_v^{\{y\}}(y) \in\ell^t (V)$ and
we consider the matrix $A_{V_0}^{V_1}$ as an operator from $\ell^t
(V_1)$ into the space of sequences indexed on $V_0$. We have:
%
%
\begin{prop}\label{Acones} Choose $t >T$ and $s$ such that $1/s +1/t =
1$. For any obstacle $U_0^-\subset U_0 \subset U_1^- \subset U_1$, all
$p \in\O_p$, the operator $A_{V_0}^{V_1}$ sends $\CC_{V_1}^t$ into
$\CC_{V_0}^t$, the adjoint operator $(A_{V_0}^{V_1})^\ast$ sends $\CC
_{V_0}^s $ into $\CC_{V_1}^s$ and
\[
\operatorname{Diam}_{\CC_{V_0}^t} \bigl( A_{V_0}^{V_1} \bigl(
\CC_{V_1}^t\bigr) \bigr) \leq4 \ln c_1,\qquad
\operatorname{Diam}_{\CC_{V_1}^s} \bigl( \bigl(A_{V_0}^{V_1}
\bigr)^\ast\bigl(\CC_{V_0}^s\bigr) \bigr) \leq4
\ln c_1,
\]
where $c_1$ and $\O_p$ are the ones in (\ref{product}).
\end{prop}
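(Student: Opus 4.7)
The plan is to exploit the near-factorization of the matrix $A = A_{V_0}^{V_1}$ given by the product estimate (\ref{product}), which says that
$$A(v_0,v_1) \;\asymp\; u(v_0)\,\alpha(v_1), \qquad u(v_0) := u_{G\setminus U_1^-}(v_0,\g(0)),\ \alpha(v_1) := \a_{\g(0)}^{V_1}(v_1),$$
with multiplicative constant $c_1$, uniformly for $p \in \O_p$. This rank-one-up-to-$c_1$ structure is precisely what forces the image cone to have bounded projective diameter: the image of any positive vector is comparable to the same function $u$, up to a scalar.

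First I would verify that $A$ actually maps $\CC_{V_1}^t$ into $\CC_{V_0}^t$. For $f \in \CC_{V_1}^t$, the bound $Af(v_0) \leq c_1 u(v_0)\,\langle \alpha, f\rangle$ reduces membership to checking $u \in \ell^t(V_0)$ and $\langle \alpha, f\rangle < \infty$. The first follows from $u(v_0) \leq u(v_0,\g(0)) = \a_{v_0}^{\{\g(0)\}}(\g(0))$, which is in $\ell^t(V_0)$ by Proposition \ref{cont*} once $t > T$. The second is H\"older's inequality, since $\alpha \in \ell^s(V_1)$ by Proposition \ref{cont} with $1/s+1/t=1$. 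The corresponding statement for $A^\ast$ is symmetric: $(A^\ast h)(v_1) \leq c_1 \alpha(v_1)\,\langle u, h\rangle$ for $h \in \CC_{V_0}^s$, and the same H\"older/integrability considerations apply.

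Next I would estimate $\tau$. Given $f,g \in \CC_{V_1}^t$, combine the upper bound on $Af$ and the lower bound on $Ag$:
$$Af(v_0) \;\leq\; c_1 u(v_0)\,\langle \alpha,f\rangle \;\leq\; c_1^2 \,\frac{\langle\alpha,f\rangle}{\langle\alpha,g\rangle}\; Ag(v_0),$$
which gives $\tau(Ag,Af) \leq c_1^2 \langle\alpha,f\rangle/\langle\alpha,g\rangle$, and symmetrically $\tau(Af,Ag) \leq c_1^2 \langle\alpha,g\rangle/\langle\alpha,f\rangle$. Multiplying and taking logarithms yields
$$\vt(Af,Ag) \;=\; \ln\bigl[\tau(Af,Ag)\tau(Ag,Af)\bigr] \;\leq\; 4\ln c_1,$$
so $\text{Diam}\, A(\CC_{V_1}^t) \leq 4\ln c_1$ as required. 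The argument for $A^\ast$ on $\CC_{V_0}^s$ is verbatim the same with the roles of $u$ and $\alpha$ swapped.

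There is really no serious obstacle here; the whole proposition is an algebraic consequence of (\ref{product}), and the only point one must be careful about is pairing the right cone with the right $\ell^p$ space so that the inner products $\langle \alpha, f\rangle$ and $\langle u, h\rangle$ make sense. That is exactly why the choice $t > T$ (invoking Proposition \ref{cont*} for $u$) and $s$ conjugate to $t$ (invoking Proposition \ref{cont} for $\alpha$) is forced on us. Uniformity over $p \in \O_p$ is built into (\ref{product}), so the diameter bound $4\ln c_1$ is itself uniform in a neighborhood of $p$.
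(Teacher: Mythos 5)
Your proof is correct and follows essentially the same route as the paper: both rest on the rank-one-up-to-$c_1$ factorization from (\ref{product}), with membership in the cones secured by $u_{G\setminus U_1^-}(\cdot,\g(0))\in\ell^t(V_0)$ (via Proposition \ref{cont*}) and $\a_{\g(0)}^{V_1}\in\ell^s(V_1)$, plus H\"older. The only cosmetic difference is that you compute $\tau(Af,Ag)$ explicitly where the paper invokes the stated fact that $\{f: c^{-1}g\leq f\leq cg\}$ has $\vt$-diameter $4\ln c$.
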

\begin{pf} By definition, $ \sum_{v_1 \in V_1} \a_{\gamma(0)}^{V_1}
(v_1) \leq1$ so that $ \a_{\gamma(0)}^{V_1} (v_1) \in\ell^s
(V_1)$. By~(\ref{product}), for any $v_0 \in V_0$, any $f \in\ell^t
(V_1)$,
\[
A_{V_0}^{V_1}f (v_0) \leq c_1
u_{G\setminus U_1} \bigl(v_0, \gamma(0)\bigr) \bigl\|
\a_{\gamma(0)}^{V_1} (v_1) \bigr\|_s \|f
\|_t.
\]
By the same argument as in the proof of Proposition~\ref{cont*}, we
see that $v_0 \mapsto u_{G\setminus U_1^-} (v_0, \gamma(0))
\in\ell^t(V_0)$.
It follows that for any $f \in\ell^t (V_1)$, $A_{V_0}^{V_1} f $
belongs to $ \ell^t(V_0)$.

By (\ref{product}), we know that for any $f \in\CC_{V_1}^t$,
%
%
\begin{equation}
\label{cone} c_1^{-1} u_{G\setminus U_1}
\bigl(v_0, \gamma(0)\bigr) \leq\frac{A_{V_0}^{V_1} f (v_0)} {\langle \a
_{\gamma(0)}^{V_1}
(\cdot),f(\cdot)\rangle} \leq
c_1 u_{G\setminus U_1} \bigl(v_0, \gamma(0)\bigr).
\end{equation}
It follows that $\operatorname{Diam}_{\CC_{V_0^t}}A_{V_0}^{V_1} (\CC
_{V_1}^t) \leq4 \ln c_1$.
The same argument works for the adjoint operator $(A_{V_0}^{V_1})^\ast
$, since we know that $v_0 \mapsto u_{G\setminus U_1} (v_0, \gamma
(0)) \in\ell^t (V_0)$ and $v_1 \mapsto\a_{\gamma(0)}^{V_1} (v_1)
\in\ell^s(V_1)$.
\end{pf}

%
\begin{prop}\label{Acont} Choose $t >T + 1, s$ such that $1/s + 1/t =
1$. The mapping $ p \mapsto A_{V_0}^{V_1} $ [resp., $p \mapsto
(A_{V_0}^{V_1})^\ast$] is $C^\infty$ from $\M(F)$ into $\mathcal L
(\ell^t (V_1), \ell^t(V_0))$ [resp., $\mathcal L (\ell^s (V_0),
\ell^s(V_1))$].
\end{prop}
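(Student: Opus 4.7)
The strategy is to write $A_{V_0}^{V_1} = \sum_n A^{(n)}$, where $A^{(n)}(v_0, v_1)$ is the probability that the walk started at $v_0$ first enters $V_1$ at $v_1$ after exactly $n$ steps. Enumerating paths makes each $A^{(n)}(v_0, v_1)$ a homogeneous polynomial of degree $n$ in the coordinates of $p$; collecting by path $\xi \in F^n$, one has $A^{(n)} = \sum_{\xi \in F^n} p_{\xi_1}\cdots p_{\xi_n}\, B_\xi$, where each $B_\xi$ is the bounded restricted translation $f \mapsto f(\cdot\, \xi_1\cdots\xi_n)$, with $\|B_\xi\|_{\ell^t \to \ell^t} \leq 1$. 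Thus $p \mapsto A^{(n)}$ is already polynomial (a fortiori $C^\infty$) as a map into $\mathcal L(\ell^t(V_1), \ell^t(V_0))$, and it will suffice to show that $\sum_n A^{(n)}$ together with every partial derivative $\partial^\alpha$ in $p$ converges in operator norm uniformly on a neighborhood $\O_p$.

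The central estimate interpolates two pointwise bounds on $A^{(n)}$. Proposition \ref{decay} gives the time decay $A^{(n)}(v_0, v_1) \leq q^{(n)}(v_0^{-1} v_1) \leq C \zeta^n$, uniformly in $q \in \O_p$. The rank-one spatial bound (\ref{product}) gives
$$A(v_0, v_1) \; \leq \; c_1\, \phi(v_0)\, \psi(v_1), \qquad \phi := u_{G\setminus U_1^-}(\cdot, \g(0)), \quad \psi := \a_{\g(0)}^{V_1}(\cdot),$$
with $\phi \in \ell^{t}(V_0)$ for every $t > T$ (by the argument of Proposition \ref{cont*}) and $\psi \in \ell^{s}(V_1)$ for every $s > 1$ (by Proposition \ref{cont}). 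Interpolating,
$$A^{(n)}(v_0, v_1) \; \leq \; (C\zeta^n)^{\th}\bigl(c_1 \phi(v_0) \psi(v_1)\bigr)^{1-\th}, \qquad \th \in [0, 1].$$
A positive operator dominated pointwise by a rank-one kernel $\varphi \otimes \chi$ has $\ell^t \to \ell^t$ norm at most $\|\varphi\|_t \|\chi\|_s$; since $\phi, \psi \leq 1$, the factor $\phi^{1-\th}$ (respectively $\psi^{1-\th}$) lies in $\ell^t$ (resp.\ $\ell^s$) as soon as $(1-\th)t > T$ and $(1-\th)s > 1$, and the hypothesis $t > T+1$ leaves a strictly positive range of admissible $\th$. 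This yields $\|A^{(n)}\|_{\ell^t \to \ell^t} \leq C_\th\, \zeta^{n \th}$, geometrically summable in $n$.

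The derivatives are handled by the same interpolation. Since $A^{(n)}$ is a polynomial of degree $n$, the pointwise bound $|\partial^\alpha A^{(n)}(v_0, v_1)| \leq n^{|\alpha|} (\inf_i q_i)^{-|\alpha|}\, A^{(n)}(v_0, v_1)$ holds uniformly on $\O_p$ (as in the proof of Proposition \ref{cont}); plugging into the interpolation produces $\|\partial^\alpha A^{(n)}\|_{\ell^t \to \ell^t} \leq C_{\alpha, \th}\, n^{|\alpha|}\, \zeta^{n\th}$, still summable. Hence $\sum_n \partial^\alpha A^{(n)}$ converges in operator norm uniformly on $\O_p$, showing that $p \mapsto A_{V_0}^{V_1}$ is $C^\infty$ from $\M(F)$ into $\mathcal L(\ell^t(V_1), \ell^t(V_0))$. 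The adjoint statement is symmetric: $(A_{V_0}^{V_1})^\ast$ is pointwise dominated by the transposed rank-one kernel $(v_1, v_0) \mapsto c_1 \psi(v_1) \phi(v_0)$, whose $\ell^s(V_0) \to \ell^s(V_1)$ norm is $\|\psi\|_s \|\phi\|_t < \infty$, and the same interpolation argument applies.

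The main obstacle is that $V_0$ and $V_1$ are typically infinite, so the time decay $\zeta^n$ alone does not control operator norms (the row sums can blow up when raised to a power), and the rank-one domination alone carries no $n$-dependence. The point of the interpolation is to package both pieces of information into a single bound that is simultaneously operator-norm finite and geometrically decaying in $n$; the strict inequality $t > T + 1$ is precisely what grants the necessary wedge of interpolation exponents $\th$.
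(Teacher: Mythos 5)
Your proof is correct and is essentially the paper's own argument: the paper likewise decomposes $A_{V_0}^{V_1}$ into the $n$-step kernels $\a_{v_0}^{n,V_1}(v_1)$ (homogeneous polynomials of degree $n$ in $p$, with $|\partial^\alpha|$ bounded by $Cn^{|\alpha|}$ times the kernel), and combines the uniform decay $C\zeta^n$ with the product bound (\ref{product}) via H\"older's inequality, which is exactly your interpolation with the specific exponent $\theta = 1/t$ --- the choice for which $t>T+1$ is precisely the required condition. Your version with a free parameter $\theta$ is a clean repackaging of the same estimate (and in fact shows that any $t>T$ would do for a suitable $\theta$).
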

\begin{pf} We follow the scheme of the proofs of Propositions \ref
{cont} and~\ref{cont*}. By Proposition~\ref{decay}, there is a
neighborhood $\O_p$ of $p$ in $\M(F)$ and numbers $C$, $\zeta, 0 <
\zeta< 1$, such that for $q \in\O_p$ and for all $y \in G$,
\[
q^{(n)} (y) \leq C \zeta^n.
\]
We write $\a_{v_0}^{V_1}(v_1)$ as the sum of the probabilities $\a
_{v_0}^{n,V_1} (v_1) $ of entering $V_1$ at $v_1$ in exactly $n$
steps. We have, for all $v_0 \in V_0, v_1 \in V_1$,
\[
\a_{v_0}^{n,V_1} (v_1) \leq q^{(n)}
\bigl(v_0^{-1}v_1\bigr) \leq C
\zeta^n.
\]
As before, the function $p \mapsto\a_{v_0}^{n,V_1} (v_1) $ is a
homogeneous polynomial of degree $n$ on $\M(F)$ and for all $\alpha=
\{ n_1, n_2, \ldots, n_{|B|}, n_i \in\N\cup\{0\} \}$, all $v_0\in
V_0,v_1 \in V_1$,
\[
\biggl|\frac{\partial^\alpha}{\partial p^\alpha} \a_{v_0}^{n,V_1} (v_1) \biggr| \leq
\frac{C n^{|\alpha|}}{(\inf_{i \in F} p_i)^{|\alpha|}} \a_{v_0}^{n,V_1}
(v_1) \leq
\frac{C n^{|\alpha|}}{(\inf_{i
\in F} p_i)^{|\alpha|}} \zeta^n.
\]
Let $f \in\ell^t (V_1)$. Then,
\begin{eqnarray*}
\sum_{v_1} \biggl|\frac{\partial^\alpha}{\partial
p^\alpha}
\a_{v_0}^{n,V_1} (v_1) \biggr| \bigl|f(v_1)\bigr| &\leq&
\biggl\|\frac{C
n^{|\alpha|}}{(\inf_{i \in F} p_i)^{|\alpha|}} \a_{v_0}^{n,V_1} (v_1)
\biggr\|_s \|f\|_t
\\
& \leq & C n^{|\a|} \zeta^{
({s-1})n/{s}}\bigl(u_{G\setminus U_1^-}
\bigl(v_0, \gamma(0)\bigr)\bigr)^{{1}/{s}} \|f\|_t.
\end{eqnarray*}
To obtain the last inequality, we use that
\[
\bigl\| \a_{v_0}^{n,V_1} (v_1)
\bigr\|_s \leq C \biggl( \zeta^{(s-1)n} \sum_{v_1} \a_{v_0}^{n,V_1} (v_1)
\biggr)^{1/s},
\]
(\ref{product}) and $\sum_{v_1} \a_{\gamma
(0)}^{V_1} (v_1) \leq1$.
Therefore,
\begin{eqnarray*}
&&\biggl\| \sum_{v_1} \biggl|\frac{\partial^\alpha}{\partial p^\alpha}
\a_{v_0}^{n,V_1} (v_1) \biggr| \bigl|f(v_1)\bigr|
\biggr\|_{\ell^t(V_0)} \\
&&\qquad\leq C n^{|\a|} \zeta^{({s-1})n/{s}} \biggl(\sum
_{v_0} \bigl(u_{G\setminus U_1} \bigl(v_0,
\gamma(0)\bigr)\bigr)^{{t}/{s}} \biggr)^{{1}/{t}} \|f\|_t.
\end{eqnarray*}
Since $t> T+1$, $t/s >T$, the series $\sum_{v_0} (u_{G\setminus U_1}
(v_0, \gamma(0)))^{{t}/{s}}$ converges, and the operator
\[
f \mapsto\sum_{v_1} \frac{\partial^\alpha}{\partial p^\alpha}
\a_{v_0}^{n,V_1} (v_1) f(v_1)
\]
has norm smaller than $ C n^{|\a|} \zeta^{({s-1})n/{s}}$ in $\LL
(\ell^t(V_1), \ell^t (V_0))$. The series of operators which defines
$ \frac{\partial^\alpha}{\partial p^\alpha}
A_{V_0}^{V_1}$ is converging.

The proof is the same for the adjoint operator. We estimate, for $g \in
\ell^s(V_0)$,
\begin{eqnarray*}
\sum_{v_0} \biggl|\frac{\partial^\alpha}{\partial
p^\alpha}
\a_{v_0}^{n,V_1} (v_1) \biggr| \bigl|g(v_0)\bigr| & \leq &
\biggl\|\frac{C
n^{|\alpha|}}{(\inf_{i \in F} p_i)^{|\alpha|}} \a_{v_0}^{n,V_1} (v_1)
\biggr\|_t \|g\|_s
\\
& \leq & Cn^{|\a|} \zeta^{{n}/{t}} \bigl( \a_{\gamma(0)}^{V_1}
(v_1) \bigr)^{({t-1})/{t}} \|g\|_s.
\end{eqnarray*}
As before, we find that the operator
\[
g \mapsto\sum_{v_0} \frac{\partial^\alpha}{\partial p^\alpha}
\a_{v_0}^{n,V_1} (v_1) g(v_0)
\]
has norm smaller than
\[
C n^{|\a|} \zeta^{{n}/{t}}\sum_{v_1}
\bigl( \a_{\gamma
(0)}^{V_1} (v_1) \bigr)^{s({t-1})/{t}}
\leq C n^{|\a|} \zeta^{{n}/{t}}
\]
in $\LL(\ell^s(V_0), \ell^s (V_1))$ (recall that $s\frac{t-1}{t} =
1$). The series of operators which defines
$ \frac{\partial^\alpha}{\partial p^\alpha}
(A_{V_0}^{V_1})^\ast$ is converging as well.
\end{pf}

\subsection{H\"older regularity of the Martin kernel}\label{Martinreg}

Fix $x \in G$ and a geodesic $\gamma$ with $\gamma(0) = e$. Consider
the family $U_0^-\subset U_0 \subset\cdots\subset U_n^- \subset U_n$
such that for all $j = 1, \ldots, n-1$,
$U_j^-\subset U_j \subset U_{j+1}^- \subset U_{j+1}$ is an obstacle for
$\gamma\circ\s^{2jM+K}$. The integer $K$ is chosen so that $x,e \in
U_0^-$, for example, $K = 4M + |x|$. With that choice, $\gamma(n)
\notin U_k$ as soon as $n > K +2kM +4r$.
Iterating the strong Markov property, we get, for $z \notin U_k$,
\[
\frac{u(x,z)}{u(e, z)} = \frac{\langle\a_x^{V_0},A_{V_0}^{V_1}\cdots
A_{V_{k-1}}^{V_k} \omega_{V_k}^z \rangle
} {\langle\a_e^{V_0},A_{V_0}^{V_1}\cdots A_{V_{k-1}}^{V_k} \omega
_{V_k}^z \rangle}.
\]
Choose $t>T+1$ and $s$ such that $1/s +1/t = 1$. Set $ f_k (z ):=
\frac{\omega_{V_k}^z}{\| \omega_{V_k}^{z}\|_t}, \a:= \a _{e}^{V_0},
\b:= \a_{x}^{V_0}$. For all $ z \notin U_k $, $f_k ( z )
\in\DD_{V_k}^t$ and $\a, \b\in\CC_{V_0}^s -\{0\} $. By Proposition
\ref{cont}, if $z,z' \notin U_k$, $\vt_{\CC^t}
(A_{V_{k-1}}^{V_k}f_k(z),A_{V_{k-1}}^{V_k} f_k(z') ) \leq4\ln c_1$.
Set $\tau= \frac{c_1^2 -1}{c_1^2 +1}$. By repeated
application of (\ref{contr}), we have, as soon as $z,z' \notin U_k$,
%
%
\begin{eqnarray}
\label{exp}
&&
\vt_{\CC^t} \bigl(A_{V_0}^{V_1}\cdots
A_{V_{k-1}}^{V_k} f_k(z), A_{V_0}^{V_1}
\cdots A_{V_{k-1}}^{V_k} f_k \bigl(z'
\bigr) \bigr)\nonumber\\
&&\qquad \leq\tau^{k-1}\vt_{\CC^t} \bigl
(A_{V_{k-1}}^{V_k}f_k(z),A_{V_{k-1}}^{V_k}
f_k\bigl(z'\bigr) \bigr) \\
&&\qquad\leq4\ln c_1
\tau^{k-1}.\nonumber
\end{eqnarray}

We are interested in the function $\Phi\dvtx \partial G \to\R$,
\[
\Phi(\xi) = -\ln K_x (\xi) = -\ln\lim_{x_n \to\xi}
\frac
{u(x,x_n)}{u(e,x_n)}.
\]
If we choose the reference geodesic $\gamma$ converging toward $\xi$,
then setting $\Phi_n (\xi) = \frac{u(x,\gamma(n))}{u(e,\gamma
(n))}$, we have $\lim_n \Phi_n (\xi) = \Phi(\xi)$. More precisely,
as soon as $n,m > K +2kM +4r$, we may write
\[
\Phi_n(\xi) - \Phi_m (\xi) = \ln\frac{ \langle\a,
T_{j_0}\cdots T_{j_{k-1} } f_k(\gamma(n)) \rangle} { \langle\b,
T_{j_0}\cdots T_{j_{k-1}} f_k (\gamma(n)) \rangle}
\frac{ \langle
\b, T_{j_0}\cdots T_{j_{k-1} } f_k(\gamma(m)) \rangle} { \langle
\a, T_{j_0}\cdots T_{j_{k-1}} f_k( \gamma(m)) \rangle},
\]
where $T_{j_s}$ is the projective action of $A_{V_s}^{V_{s+1}}$. By
(\ref{exp}) and Lemma~\ref{Liverani}, we have, as soon as $n,m > K
+2kM +4r$, $|\Phi_n(\xi) - \Phi_m (\xi) |\leq C \tau^k$. For the
same reason, for any fixed family of $f_k \in\DD^t_{V_k} $, the
sequence $T_{j_0}\cdots T_{j_{k-1} } f_k $ converge in $\DD^t_{V_0}$
toward some $f_\infty$, independent of the choice of $f_k$ and a
priori depending on the geodesic $\gamma$ converging toward $\xi$.
In any case, we have
%
%
\begin{equation}
\label{Derriennic} \| T_{j_0}\cdots T_{j_{k-1} } f_k -
f_\infty\|_t \leq C \tau^k \quad\mbox{and}\quad \Phi(
\xi) = \ln\frac{ \langle\a,f_\infty\rangle} { \langle\a_1,
f_\infty\rangle}.
\end{equation}

Consider now two points $\xi, \eta\in\partial G$ such that $\rho
(\xi, \eta) < a^{-n -C}$. Then there is a geodesic $\gamma$
converging to $\xi$ and a sequence $\{y_\ell\}_{\ell\geq1}$ going
to $\eta$ such that for $\ell, m$ large enough, $(\gamma(m), y_\ell
) >n$. For fixed $x$ and $K = 4M+|x|$, consider the same family
$U_0^-\subset U_0 \subset\cdots\subset U_k^- \subset U_k$ such that
for all $j = 1, \ldots, k-1$,
$U_j^-\subset U_j \subset U_{j+1}^- \subset U_{j+1}$ is an obstacle for
$\gamma\circ\s^{2jM+K}$. We have:
%
%
\begin{lem}\label{catchy} Assume $2kM < n -K -4r -22 \delta$ and
$\ell$ large enough. Then, $y_\ell\notin U_k$.
\end{lem}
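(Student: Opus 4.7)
The plan is to reduce the non-containment $y_\ell\notin U_k$ to a direct distance comparison on the reference geodesic $\gamma$, and then convert the Gromov-product hypothesis $(\gamma(m)|y_\ell)_e>n$ into sharp control of these distances via the $\delta$-hyperbolicity inequality (\ref{hyp}).

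First I would unpack the iterated construction. Since the obstacle between $U_{k-1}$ and $U_k$ is based on the shifted geodesic $\gamma\circ\sigma^{2(k-1)M+K}$, the definition of the outermost set of an obstacle reads
\[
U_k=\{x\in G:d(x,\gamma(a))<d(x,\gamma(b))\},\qquad a:=2(k-1)M+K,\ b:=2kM+K+4r.
\]
The desired statement is therefore equivalent to the inequality $d(y_\ell,\gamma(a))\ge d(y_\ell,\gamma(b))$. The hypothesis $2kM<n-K-4r-22\delta$ yields $b<n-22\delta$, so both parameters $a<b$ lie comfortably below $n$, i.e.\ inside the range where one can control distances to $y_\ell$.

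The key geometric step is a fellow-travel estimate between $y_\ell$ and $\gamma$. For $\ell,m$ large we have $(\gamma(m)|y_\ell)_e>n$, and since $\gamma$ is a geodesic based at $e$, $(\gamma(t)|\gamma(m))_e=t$ whenever $t\le m$. Substituting $w=e$, $x=\gamma(t)$, $y=y_\ell$, $z=\gamma(m)$ in (\ref{hyp}) then gives
\[
(\gamma(t)|y_\ell)_e\ge \min\{t,(\gamma(m)|y_\ell)_e\}-\delta\ge t-\delta\qquad(t\le n).
\]
Unfolding the definition of the Gromov product turns this into the upper bound $d(\gamma(t),y_\ell)\le|y_\ell|-t+2\delta$.

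Specializing this at $t=b$ and combining with the trivial triangle-inequality lower bound $d(y_\ell,\gamma(a))\ge|y_\ell|-a$ yields
\[
d(y_\ell,\gamma(a))-d(y_\ell,\gamma(b))\ge (b-a)-2\delta=2M+4r-2\delta,
\]
which is strictly positive since $M\ge R+12r$ and $r\ge\delta$ by the choice of $r$. Hence $y_\ell\notin U_k$. I do not foresee any real obstacle here: the $22\delta$ margin built into the hypothesis is much more than the argument needs, since only one application of (\ref{hyp}) is made and it consumes only $2\delta$; the remaining slack is presumably buffer for subsequent use of the lemma.
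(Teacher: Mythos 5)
Your proof is correct, and it reaches the conclusion by a genuinely different (and more elementary) route than the paper. You reduce the claim to the inequality $d(y_\ell,\gamma(a))\ge d(y_\ell,\gamma(b))$ with $a=2(k-1)M+K$ and $b=2kM+K+4r$ — which is exactly the reduction the paper also makes — but you then prove it by a direct fellow-travelling estimate: a single application of (\ref{hyp}) based at $e$ gives $|y_\ell|-t\le d(y_\ell,\gamma(t))\le |y_\ell|-t+2\delta$ for all $t\le n$, and the claim follows because $b-a-2\delta=2M+4r-2\delta>0$ and the hypothesis guarantees $b<n$. The paper instead locates the closest point of $\gamma$ to $y_\ell$: it shows the minimum of $s\mapsto d(y_\ell,\gamma(s))$ is attained at some $s_0\ge n-12\delta$ (via $4\delta$-slimness of the triangle on $e$, $y_\ell$, $\xi$ and an estimate from the proof of Lemma 22.4 in \cite{W}) and then invokes the $8\delta$-convexity of $s\mapsto d(y_\ell,\gamma(s))$ from \cite{GH} to conclude that the function is essentially non-increasing on $[a,b]$. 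Your argument avoids both the convexity lemma and the projection argument, and consumes only $2\delta$ of the available slack; note, though, that the $22\delta$ in the hypothesis is not a buffer for later applications but precisely what the paper's own chain of estimates ($4\delta$, $8\delta$, $8\delta$, $2\delta$) adds up to. The one implicit assumption you use, namely $\gamma(0)=e$ so that $(\gamma(t)|\gamma(m))_e=t$, is indeed part of the construction carried over from Section \ref{Martinreg}, so there is no gap.
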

\begin{pf} Choose $\ell$ large enough that $\lim_{m \to\infty}
(\gamma(m), y_\ell) >n $, and we choose a geodesic $[y_\ell, \xi]$
such that $(y_\ell, \xi) > n$. By definition of $U_j$, we have to
show that $d(y_\ell, \gamma(2(j-1)M+K)) \geq d(y_\ell, \gamma(2jM
+K + 4r))$ for $2jM +K+r+22\delta<n$.
By continuity, there is a point $s_0$ where the function $s \mapsto
d(y_\ell, \gamma(s)) $ attains its minimum. We are going to show that
$s_0 \geq n - 12 \delta$. By $8 \delta$ convexity of $s \mapsto
d(y_\ell, \gamma(s))$ (\cite{GH}, Proposition 25, page 45), this
proves the claim.\footnote{Indeed, since $2jM +K+r+22\delta<n$, $
\gamma(2jM +K + 4r +10\delta) $ lies between $ \gamma(2(j-1)M +K)$
and $s_0$ and thus, by $8\delta$ convexity of the distance, $ d(y_\ell
, \gamma(2jM +K + 4r +10\delta)) \leq d(y_\ell, \gamma
(2(j-1)M+K))+8\delta$ [recall that $s_0$ achieves the minimum of
$d(y_\ell, \gamma(s))$]. The inequality follows by writing the
$\delta$-hyperbolicity relation (\ref{hyp}) with $x = y_\ell, y =
\gamma(2jM +K + 4r ), z = \gamma(2jM +K + 4r +10\delta)$ and $w =
\gamma(2(j-1) M +K)$.}

By continuity, there is a point $s_1$ such that $d(\gamma(s_1),
[\gamma(0), y_\ell] ) = d(\gamma(s_1),\break [ y_\ell, \xi] ) \leq
4\delta$. On the one hand,
\[
s_1 \geq d\bigl(\gamma(0), [y_\ell, \xi]\bigr) - 4\delta
\geq n -4\delta
\]
[recall that $(\xi, y_\ell) >n$]. On the other hand, we know that
\[
d\bigl(y_\ell, \gamma(s_1) \bigr) \leq\bigl(\gamma(0),
\xi\bigr)_{y_\ell} + 8 \delta\leq d\bigl(y_\ell,
\gamma(s_0)\bigr) +8\delta
\]
(see the proof of Lemma 22.4 in~\cite{W}). It follows that $s_0 \geq
s_1 - 8\delta\geq n-12\delta$.
\end{pf}

We have that $\Phi(\xi) - \Phi(\eta) = \lim_{x_m \to\xi, y_\ell
\to\eta} \ln(\frac{u(e,x_m)}{u(x,x_m)} \frac{u(x,y_\ell
)}{u(e,y_\ell)} )$. With the above notation, assume that $k$ is
such that $2kM < n-K -4r-22\delta$. If $\ell$ and $m$ are large
enough, $y_\ell, \gamma(m) \notin U_k $ and
\[
\Phi(\xi) - \Phi(\eta) = \lim_{x_m \to\xi, y_\ell\to\eta} \ln\frac{
\langle\b, T_{j_0}\cdots T_{j_{k-1} } f_k(\gamma(m))
\rangle} { \langle\a, T_{j_0}\cdots T_{j_{k-1}} f_k (\gamma(m))
\rangle}
\frac{ \langle\a, T_{j_0}\cdots T_{j_{k-1} } f_k(y_\ell
) \rangle} { \langle\b, T_{j_0}\cdots T_{j_{k-1}} f_k(y_\ell)
\rangle}.
\]
Since as above, we have $\vt( T_{j_0}\cdots T_{j_{k-1} } f_k(\gamma
(m)), T_{j_0}\cdots T_{j_{k-1} } f_k(y_\ell) ) < C \tau^k$, and $\a
, \b$ take a finite number of values, we have
\[
\bigl|\Phi(\xi) - \Phi(\eta)\bigr| \leq C \tau^k \leq C \rho_0^n
\]
for a new constant $C$ and $\rho_0= \tau^{1/2M}$. This shows that for
all $x \in G$, the function $\xi\mapsto-\ln K_x (\xi) $ is H\"older
continuous on $\partial G$. Moreover, the H\"older exponent $|{\ln\rho
_0}|/ \ln a $ and the H\"older constant $C$ are uniform on a
neighborhood of $p$ in $\M(F)$.\vadjust{\goodbreak}

Let us choose $ \k<1, \k< -\frac{\ln\rho_0}{2\ln a}
$, and consider the space $\G_\k$ of functions $\phi$ on $\partial
G$ such that there is a constant $C_\k$ with the property that $|\phi
(\xi) - \phi(\eta)| \leq C_\k(d(\xi, \eta))^\k$. For $\phi\in
\G_\k$, denote $\| \phi\|_\k$ the best constant $C_\k$ in this
definition. The space $\G_\k$ is a Banach space for the norm $ \|
\phi\|:= \| \phi\|_\k+ \max_{\partial G} |\phi|$. In this
subsection, we showed that for $p \in\M(F) $, $x \in G$, there exist
$\k>0$ and a neighborhood $\O_p$ of $p$ in $\M(F)$ such that for $p'
\in\O_p$, the function $\Phi_{p'}( \xi) =
- \ln K_{\xi} (x)$ belongs to $\G_\k$ and that the mapping $p'
\mapsto\Phi_{p'} $ is bounded from $\O_p $ into $\G_\k$.\vspace*{-2pt}

\section{The Martin kernel depends regularly on $p$}\vspace*{-2pt}
%
%
\begin{prop}\label{LipMartin} Fix $x \in G$. For all $p \in\M(F)$,
there exist $\k>0$ and a neighborhood $\O_p$ of $p$ in $\M(F)$ such
that the mapping $p \mapsto\Phi(\xi) = - \ln K_{\xi} (x)$ is
Lipschitz continuous from $\O_p$ into $\G_\k$.\vspace*{-2pt}
\end{prop}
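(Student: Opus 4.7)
My plan is to combine the representation of $\Phi_p(\xi)$ from Section \ref{sec:Martin}, namely
$$\Phi_p(\xi) \;=\; \ln\langle \a^p, f^p_\infty\rangle - \ln\langle \b^p, f^p_\infty\rangle,\qquad f^p_\infty \;=\; \lim_k T^p_{j_0}\cdots T^p_{j_{k-1}} f_k,$$
given by (\ref{Derriennic}) with $\a^p = \a^{V_0,p}_e$ and $\b^p = \a^{V_0,p}_x$, with the $C^\infty$ dependence in $p$ of the cone vectors $\a^p, \b^p$ (Proposition \ref{cont}) and of the transition operators $A^p_{V_i}$ (Proposition \ref{Acont}), and to exploit the Birkhoff contraction (\ref{contr}) to promote this pointwise smoothness into uniform Lipschitz control of the infinite projective orbit.

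\textbf{Step 1: pointwise Lipschitz.} For $p' \in \O_p$ close to $p$ and any $\xi \in \partial G$, fix the obstacle family along a geodesic $\g$ converging to $\xi$. Apply the telescoping identity
$$g^p_k - g^{p'}_k \;=\; \sum_{i=0}^{k-1} A^p_{V_0}\cdots A^p_{V_{i-1}}\big(A^p_{V_i} - A^{p'}_{V_i}\big) A^{p'}_{V_{i+1}}\cdots A^{p'}_{V_{k-1}} f_k(z),$$
with $g^p_k := A^p_{V_0}\cdots A^p_{V_{k-1}} f_k(z)$ for $z \not\in U_k$. Proposition \ref{Acont} gives $\|A^p_{V_i} - A^{p'}_{V_i}\|_{\LL(\ell^t,\ell^t)} \le L\|p-p'\|$ uniformly, while the prefix $T^p_{j_0}\cdots T^p_{j_{i-1}}$ contracts projective distances by $\tau^i$ via (\ref{contr}). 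Summing the resulting geometric series and letting $k\to\infty$ yields $\vt(f^p_\infty, f^{p'}_\infty) \le C\|p-p'\|$, which Lemma \ref{Liverani} converts into $\|f^p_\infty - f^{p'}_\infty\|_t \le C\|p-p'\|$ after normalization. Combining with the Lipschitz bounds $\|\a^p - \a^{p'}\|_s + \|\b^p - \b^{p'}\|_s \le C\|p-p'\|$ from Proposition \ref{cont}, and the uniform lower bounds on $\langle \a^p, f^p_\infty\rangle$ and $\langle \b^p, f^p_\infty\rangle$ coming from (\ref{product}) applied to the first obstacle, yields $\sup_\xi |\Phi_p(\xi) - \Phi_{p'}(\xi)| \le C\|p-p'\|$.

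\textbf{Step 2: H\"older seminorm.} Given $\xi, \eta \in \partial G$ with $\rho(\xi,\eta) \le a^{-n-C_0}$, Lemma \ref{catchy} provides a common obstacle family up to level $k$ with $2kM < n - K - 4r - 22\d$, together with representatives $z_\xi, z_\eta \not\in U_k$. As in Section \ref{Martinreg}, write
$$\Phi_p(\xi) - \Phi_p(\eta) \;=\; \ln\frac{\langle \b^p, g^p_k(z_\xi)\rangle \langle \a^p, g^p_k(z_\eta)\rangle}{\langle \a^p, g^p_k(z_\xi)\rangle \langle \b^p, g^p_k(z_\eta)\rangle} + O(\tau^k),$$
with the analogous expression for $p'$. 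Subtracting and bounding each of the four factor differences using Step 1 (applied to $\a^p-\a^{p'}$, $\b^p-\b^{p'}$ and $g^p_k - g^{p'}_k$), together with the fact from (\ref{exp}) that $\vt(g^p_k(z_\xi), g^p_k(z_\eta)) \le C\tau^k$, gives
$$|(\Phi_p - \Phi_{p'})(\xi) - (\Phi_p - \Phi_{p'})(\eta)| \;\le\; C\|p-p'\|\, \tau^k \;\le\; C\|p-p'\|\, \rho(\xi,\eta)^\k$$
for any $\k < |\ln\tau|/(2M\ln a)$. Together with Step 1, this establishes $\|\Phi_p - \Phi_{p'}\|_{\G_\k} \le C\|p-p'\|$.

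\textbf{Main obstacle.} The delicate point is Step 1: converting the operator-norm Lipschitz bound from Proposition \ref{Acont} into a genuine Hilbert-metric Lipschitz bound on the projective image, since $\vt$ is sensitive to pointwise multiplicative perturbations while the operator norm only controls $\ell^t$-norm perturbations. The resolution is to exploit the factorization (\ref{product}), which identifies each $A^p_{V_i}$ with the rank-one operator $u^p_{G\setminus U^-_{i+1}}(\cdot, \g(0))\otimes \a^{V_{i+1},p}_{\g(0)}(\cdot)$ up to a bounded factor in $[c_1^{-1}, c_1]$ uniform on $\O_p$. The two scalar factors are $C^\infty$ in $p$ in $\ell^t$ and $\ell^s$ respectively with uniformly controlled derivatives (Propositions \ref{cont}, \ref{cont*}), so the bounded Harnack error can be absorbed into constants and the operator perturbation translates into a pointwise multiplicative perturbation of the principal part of $A^p_{V_i} g$, giving the needed $\vt(A^p_{V_i} g, A^{p'}_{V_i} g) \le C\|p-p'\|$ uniformly in $g \in \DD^t_{V_{i+1}}$, which drives the iteration in Step 1.
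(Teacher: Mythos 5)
Your Step 1 follows the paper's Claim 1 closely (same telescoping over the operator index, same use of Proposition \ref{Acont} and of the Birkhoff contraction to sum the geometric series), and the ``main obstacle'' you flag --- converting operator-norm perturbations into Hilbert-metric perturbations via the factorization (\ref{product}) --- is a legitimate point that the paper itself treats rather briskly. But Step 2 contains a genuine gap, and it is precisely where the paper spends most of its effort. You need the \emph{mixed} second difference $(\Phi_p-\Phi_{p'})(\xi)-(\Phi_p-\Phi_{p'})(\eta)$ to be bounded by the \emph{product} $C\,\vt(p,p')\,\tau^{ck}$. What you actually have in hand is: each of the four factor differences is $O(\vt(p,p'))$ (Step 1), and separately $\vt(g_k(z_\xi),g_k(z_\eta))\le C\tau^k$. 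Using the first set of bounds gives $|(\Phi_p-\Phi_{p'})(\xi)-(\Phi_p-\Phi_{p'})(\eta)|\le C\vt(p,p')$ with no decay in $k$; using the second gives $\le C\tau^k$ with no smallness in $p-p'$. The minimum of two small quantities is not their product, so the displayed bound $C\|p-p'\|\tau^k$ does not follow from ``subtracting and bounding each of the four factor differences.'' The same issue shows why an interpolation between the sup-norm bound of Step 1 and the uniform H\"older bound of Section \ref{Martinreg} would only give H\"older (not Lipschitz) dependence on $p$.

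The paper's resolution is a genuinely second-order argument: it writes the quadruple ratio (\ref{long}) as a telescoping \emph{product} over the obstacle index $i$, so that the $i$-th factor isolates a single operator difference $A_{j_i}-A'_{j_i}$ (contributing $\vt(p,p')$) paired with either $\vt(\a_i,\b_i)\le C\tau^i$ or $\vt(g_i,h_i)\le C\tau^{k-i}$ (contributing spatial decay); each factor is then a second difference bounded by the product of the two first-order smallnesses. Crucially, one must switch between the forward operators and their adjoints at $i=[k/2]$, because $\tau^{k-i}$ is only useful for $i\le k/2$ and $\tau^{i}$ only for $i>k/2$; summing $\sum_{i\le k/2}\tau^{k-i}+\sum_{i>k/2}\tau^{i}$ yields $C\tau^{k/2}\vt(p,p')$, not $C\tau^{k}\vt(p,p')$. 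This halving is why the paper requires $\k<-\ln\rho_0/(2\ln a)=|\ln\tau|/(4M\ln a)$, whereas your claimed exponent $\k<|\ln\tau|/(2M\ln a)$ is too optimistic. A secondary point: your identity $\Phi_p(\xi)-\Phi_p(\eta)=[\text{main term}]+O(\tau^k)$ introduces an error that, upon differencing in $p$, contributes $O(\tau^k)$ without the factor $\vt(p,p')$; the paper avoids this by working with the exact limiting expression (\ref{long}) and by showing that each substitution ($g'_k\to g_k$, $h'_k\to h_k$, $\a'\to\a$, $\b'\to\b$) costs at most $C\rho_0^n\vt(p,p')$.
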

\begin{pf} Let $p \in\M(F)$ and choose $\k= \k(p) $ given by
Section~\ref{Martinreg}. We have to find a neighborhood $\O$ of $p$
in $\M(F)$ and a constant $C$ such that, for $p'\in\O$,
\[
\| \Phi_p - \Phi_{p'} \| = \max_\xi\bigl|
\Phi_p (\xi)- \Phi_{p'} (\xi) \bigr| + \|\Phi_p -
\Phi_{p'} \|_\k\leq C \vt\bigl(p,p'\bigr),
\]
where, for convenience, we use on $\M(F)$ the already defined
projective distance on $\R^F$. We treat the two terms separately.\vspace*{-2pt}
\begin{Claim}\label{claim1} $\max_\xi|\Phi_p (\xi)- \Phi_{p'} (\xi) | \leq C
\vt(p,p')$.\vspace*{-2pt}
\end{Claim}

Choose the geodesic $\gamma$ converging to $\xi$. Applying Section
\ref{Martinreg} and (\ref{Derriennic}), there are vectors $f_\infty
(p), f_\infty(p') \in\ell^t(V_0)$ such that
\[
\bigl|\Phi_p (\xi)- \Phi_{p'} (\xi) \bigr| = \biggl| \ln
\frac{ \langle\a
(p),f_\infty(p)\rangle} { \langle\a(p'), f_\infty(p')\rangle} \frac{
\langle\b(p'),f_\infty(p') \rangle} { \langle\b(p),
f_\infty(p)\rangle} \biggr|.
\]
By Proposition~\ref{cont}, we make an error of order $C\vt(p,p')$
when replacing $\b(p')$ by $\b(p) $ and $\a(p')$ by $\a(p) $.
The remaining term is
%
%
\begin{equation}
\label{uniformerror} \lim_k \biggl|\ln\frac{
\langle\a, T_{j_0}\cdots T_{j_{k-1} } f_k\rangle} { \langle\a,
T'_{j_0}\cdots T'_{j_{k-1}} f_k \rangle} \frac{ \langle\b,
T'_{j_0}\cdots T'_{j_{k-1} } f_k \rangle} { \langle\b,
T_{j_0}\cdots T_{j_{k-1}} f_k \rangle}
\biggr|,
\end{equation}
where $T_{j_s}$ is the projective action of $A_{V_s}^{V_{s+1}}(p)$,
$T'_{j_s}$ the projective action of $A_{V_s}^{V_{s+1}}(p')$\vspace*{2pt} and we have
chosen once for all $f_k \in\ell^t (V_k)$, independent of $p \in\O$.

We have
\begin{eqnarray*}
&& \vt\bigl(T_{j_0}\cdots T_{j_{k-1} } f_k,T'_{j_0}
\cdots T'_{j_{k-1}
} f_k \bigr)
\\
&&\qquad\leq \sum_{i = 1}^{k-1} \vt
\bigl(T_{j_0}\cdots T_{j_{i-1} } T'_{j_i}
\cdots T'_{j_{k-1} } f_k, T_{j_0}\cdots
T_{j_{i} } T'_{j_{i+1}}\cdots T'_{j_{k-1} }
f_k \bigr)
\\
&&\qquad\leq \sum_{i = 1}^{k-1}\tau^{i-1}
\vt\bigl( T'_{j_i}\cdots T'_{j_{k-1} }
f_k, T_{j_i}T'_{j_{i+1}}\cdots
T'_{j_{k-1} } f_k \bigr),
\end{eqnarray*}
where we used (\ref{exp}) to write the last line. If the neighborhood
$\O$ is relatively compact in $\M(F)$, all points $T'_{j_{i+1}}\cdots
T'_{j_{k-1} } f_k $ are in a common bounded
subset of $\DD^t_{V_{j_{i+1}}}$. By Proposition~\ref{Acont}, there is
a constant $C$ and a neighborhood $\O$ such that for $p' \in\O$, $i
= 1, \ldots, k-1$,
\[
\vt\bigl( T'_{j_i}T'_{j_{i+1}}
\cdots T'_{j_{k-1} } f_k, T_{j_i}T'_{j_{i+1}}
\cdots T'_{j_{k-1} } f_k \bigr) \leq C\vt
\bigl(p,p'\bigr).
\]
Finally, we get that for all $k$, $ \vt
(T_{j_0}\cdots T_{j_{k-1} } f_k,T'_{j_0}\cdots T'_{j_{k-1} } f_k )
\leq\frac{C}{1-\tau} \vt(p,p')$. Reporting in (\ref{uniformerror})
proves Claim~\ref{claim1}.

\begin{Claim}\label{claim2}
$ \|\Phi_p - \Phi_{p'} \|_\k\leq C \vt(p,p')$.
\end{Claim}

Let $\xi, \eta\in\partial G$ be such that $\rho(\xi, \eta) <
a^{-n -C}$. We want to show that there is a constant $C$ and a
neighborhood $\O$, independent on $n$ such that, for $p' \in\O$,
\[
\bigl|\Phi_p (\xi)- \Phi_{p'} (\xi) -\Phi_p (
\eta)+ \Phi_{p'} (\eta)\bigr| \leq C a^{-\k n } \vt
\bigl(p,p'\bigr).
\]
Choose as before a geodesic $\gamma$ converging to $\xi$ and a
sequence $\{y_\ell\}_{\ell\geq1}$ going to $\eta$ such that for
$\ell, m$ large enough, $(\gamma(m), y_\ell) >n$. For fixed $x$ and
$K = 4M+|x|$, consider the same family $U_0^-\subset U_0 \subset\cdots
\subset U_k^- \subset U_k$ such that for all $j = 1, \ldots, k-1$,
$U_j^-\subset U_j \subset U_{j+1}^- \subset U_{j+1}$ is an obstacle for
$\gamma\circ\s^{2jM+K}$. By Lem\-ma~\ref{catchy}, for $\ell$ large
enough, $y_\ell\notin U_k $, and we may write $\Phi_p (\xi)- \Phi_{p'}
(\xi) -\Phi_p (\eta)+ \Phi_{p'} (\eta) $ as
%
%
\begin{eqnarray}
\label{long}
&&
\lim_{x_m \to\xi, y_\ell\to\eta} \ln\frac{ \langle\b,
T_{j_0}\cdots T_{j_{k-1} }g_k \rangle} {
\langle\a, T_{j_0}\cdots T_{j_{k-1}} g_k \rangle} \frac{ \langle
\a', T'_{j_0}\cdots T'_{j_{k-1} }g'_k \rangle} { \langle\b',
T'_{j_0}\cdots T'_{j_{k-1}} g'_k \rangle}
\nonumber\\[-8pt]\\[-8pt]
&&\qquad\hspace*{35pt}{}\times\frac{ \langle\a,
T_{j_0}\cdots T_{j_{k-1} } h_k \rangle} { \langle\b, T_{j_0}\cdots
T_{j_{k-1}} h_k \rangle} \frac{ \langle\b', T'_{j_0}\cdots
T'_{j_{k-1} }h'_k \rangle} { \langle\a', T'_{j_0}\cdots
T'_{j_{k-1}} h'_k \rangle},\nonumber
\end{eqnarray}
where $\a= \a(p), \a' = \a(p'), \b= \b(p), \b' = \b(p')$,
$T_{j_s}$ is the projective action of $A_{V_s}^{V_{s+1}}(p)$,
$T'_{j_s}$ the projective action of $A_{V_s}^{V_{s+1}}(p')$ and $g_k,
g'_k$ are $f_k(\gamma(m) )$ calculated with $p$ and $p'$, respectively,
$h_k, h'_k$ are $f_k(y_\ell)$ calculated with $p$ and $p'$.

Recall that $g_k = f_k (\gamma(m)) $ is the direction of $\omega
_v^{\gamma(m)} $ in $\ell^t(V_k)$. It can obtained by a series of
obstacles along $\gamma$ between $U_k $ and $\gamma(m) $. Let us show
that we can choose $m$ large enough (depending on $p'$) such that we
have $\vt(g_k, g'_k) \leq C \vt(p,p')$. Indeed,
\begin{eqnarray*}
\vt\bigl(g_k, g'_k\bigr) &=& \vt
\bigl(f_k \bigl(\gamma(m)\bigr), f'_k \bigl(
\gamma(m)\bigr)\bigr) \\
&=& \vt\bigl(T_{j_k}\cdots T_{j_{m-1} }
f_m,T'_{j_k}\cdots T'_{j_{m-1} }
f'_m \bigr).
\end{eqnarray*}
We have $\vt(f_m, f'_m) < C $ and for $m$ large enough,
\[
\vt\bigl(T'_{j_k}\cdots T'_{j_{m-1} }
f_m,T'_{j_k}\cdots T'_{j_{m-1} }
f'_m \bigr) < \tau^{m-k } C \leq\vt
\bigl(p,p'\bigr).
\]
By the same computation as in Claim~\ref{claim1}, we then have
\[
\vt\bigl(T_{j_k}\cdots T_{j_{m-1} } f_m,T'_{j_k}
\cdots T'_{j_{m-1} } f_m \bigr) \leq C \vt
\bigl(p,p'\bigr).
\]
Since $\vt(g_k, g'_k) \leq C \vt(p,p')$, using the contraction of the
$T_j$, we can replace $g'_k$ by $g_k$ in (\ref{long}) with an error
less than $C\tau^k \vt(p, p') < C \rho_0^n \vt(p,p')$. In the same
way, following obstacles along the geodesic between
$\gamma(n) $ and $y_\ell$, we have, for $\ell$ large enough, $\vt
(h_k, h'_k) \leq\vt(p,p')$, and we can replace $h'_k $ by $h_k$ in
(\ref{long}) with an error less than $ C \rho_0^n \vt(p,p') $.

Observe also that all terms $ \dot\a= \a/\|\a\|_s, \dot\b= \b/
\|\b\|_s$ belong to $\DD^s_{V_{j_0}}$. We may write, considering, for
instance, $\langle\a', T'_{j_0}\cdots T'_{j_{k-1} }g'_k \rangle$,
\begin{eqnarray*}
\frac{ \langle\a', T'_{j_0}\cdots T'_{j_{k-1}
}g'_k \rangle} { \langle\a, T'_{j_0}\cdots T'_{j_{k-1}} g'_k
\rangle} &=& \frac{ \langle\a', A'_{j_0}\cdots A'_{j_{k-1} }g'_k
\rangle} { \langle\a, A'_{j_0}\cdots A'_{j_{k-1}} g'_k \rangle}
\\
&=& \frac{ \langle(A'_{j_{k-1}})^\ast\cdots(A'_{j_0})^\ast\a',
g'_k \rangle} { \langle( A'_{j_{k-1}})^\ast\cdots(A'_{j_0})^\ast
\a, g'_k \rangle}
\\
&=& \frac{\| \a' \|_s} {\| \a\|_s } \frac{ \langle
(T'_{j_{k-1}})^\ast\cdots(T'_{j_0})^\ast\dot\a', g'_k \rangle} {
\langle( T'_{j_{k-1}})^\ast\cdots(T'_{j_0})^\ast\dot\a, g'_k
\rangle},
\end{eqnarray*}
where $(T'_j)^\ast$ denotes the projective action of $(A'_j)^\ast$
on $\DD^s_{V_j}$. Observe that if we replace $ \a' $ by $ \a$, $\b'$ by
$\b$ in (\ref{long}) and use the above equation and its
analogs, the ratios $\frac{\| \a' \|_s} {\| \a\|_s } $, $\frac{\|
\b'\|_s} {\| \b\|_s } $ cancel one another, and
using the contraction of the $(T'_j)^\ast$, we make an other error of
size at most $ C \rho_0^n \vt(p,p') $.

We find that, up to an error of size at most $ C \rho_0^n \vt(p,p')
$, the difference $\Phi_p (\xi)- \Phi_{p'} (\xi) -\Phi_p (\eta)+
\Phi_{p'} (\eta)$ is given by
\begin{eqnarray*}
&&
\lim_{x_m \to\xi, y_\ell\to\eta} \ln\frac{ \langle\dot\b,
A_{j_0}\cdots A_{j_{k-1} }g_k \rangle} { \langle\dot\b,
A'_{j_0}\cdots A'_{j_{k-1}} g_k \rangle} \frac{ \langle\dot\a,
A'_{j_0}\cdots A'_{j_{k-1} }g_k \rangle} { \langle\dot\a,
A_{j_0}\cdots A_{j_{k-1}} g_k \rangle} \\
&&\qquad\hspace*{35pt}{}\times\frac{ \langle\dot\a,
A_{j_0}\cdots A_{j_{k-1} } h_k \rangle} { \langle\dot\a,
A'_{j_0}\cdots A'_{j_{k-1}} h_k \rangle}
\frac{ \langle\dot\b,
A'_{j_0}\cdots A'_{j_{k-1} }h_k \rangle} { \langle\dot\b,
A_{j_0}\cdots A_{j_{k-1}} h_k \rangle},
\end{eqnarray*}
where we reordered the denominators to get a sum of four terms of the
form
\[
\pm\ln\frac{ \langle\a, A_{j_0}\cdots A_{j_{k-1} }g \rangle} {
\langle\a, A'_{j_0}\cdots A'_{j_{k-1}} g\rangle}
\]
with $\a\in\DD^s_{V_{j_0}}, g \in\DD^t_{V_{j_k}}$. We can arrange
each such term and write
\begin{eqnarray*}
& & \frac{ \langle\a, A_{j_0}\cdots A_{j_{k-1} }g \rangle} {
\langle\a, A'_{j_0}\cdots A'_{j_{k-1}} g\rangle}
\\
&&\qquad= \prod_{i=0}^{k-1} \frac{ \langle\a, A_{j_0}\cdots A_{j_{i-1}}
A_{j_i}A'_{j_{i+1}} \cdots A'_{j_{k-1} }g \rangle} { \langle\a,
A_{j_0}\cdots A_{j_{i-1}} A'_{j_i}A'_{j_{i+1}} \cdots A'_{j_{k-1}}
g\rangle}
\\
&&\qquad= \prod_{i=0}^{[k/2]} \frac{ \langle( A'_{j_{i-1}})^\ast\cdots
(A'_{j_0})^\ast\a, A'_{j_i} g_i \rangle} { \langle(
A'_{j_{i-1}})^\ast\cdots(A'_{j_0})^\ast\a, A_{j_i} g_i \rangle}
\times\prod_{i=[k/2]+1}^{k-1} \frac{\langle(A_{j_i})^\ast\a_i,
A'_{j_{i+1}}\cdots A'_{j_{k-1}}g \rangle}{\langle(A'_{j_i})^\ast\a_i,
A'_{j_{i+1}}\cdots A'_{j_{k-1}}g \rangle}
\\
&&\qquad= \prod_{i=0}^{[k/2]} \frac{ \langle(A_{j_i})^\ast\a_i, g_i
\rangle} { \langle(A'_{j_i})^\ast\a_i, g_i \rangle}
\times\prod_{i=[k/2]+1}^{k-1} \frac{\langle\a_i, A_{j_i} g_i \rangle
}{\langle\a_i, A'_{j_i} g_i \rangle},
\end{eqnarray*}
where $\a_i = (T_{j_{i-1}})^\ast\cdots(T_{j_0})^\ast\a$, $ g_i =
T'_{j_{i+1}}\cdots T'_{j_{k-1}} g$. Set $\b_i = (T_{j_{i-1}})^\ast
\cdots(T_{j_0})^\ast\b$, $ h_i = T'_{j_{i+1}}\cdots T'_{j_{k-1}} h$.
We are reduced to estimate
\begin{eqnarray*}
& & \prod_{i=0}^{[k/2]} \frac{ \langle(A_{j_i})^\ast\b_i, g_i
\rangle} { \langle(A'_{j_i})^\ast\b_i, g_i \rangle}
\frac{
\langle(A'_{j_i})^\ast\a_i, g_i \rangle} { \langle
(A_{j_i})^\ast\a_i, g_i \rangle}\frac{ \langle(A_{j_i})^\ast\a_i, h_i
\rangle} { \langle(A'_{j_i})^\ast\a_i, h_i \rangle
}\frac{ \langle(A'_{j_i})^\ast\b_i, h_i \rangle} { \langle
(A_{j_i})^\ast\b_i, h_i \rangle}
\\
&&\qquad{}\times \prod_{i=[k/2]+1}^{k-1}
\frac{\langle\b_i, A_{j_i} g_i
\rangle}{\langle\b_i, A'_{j_i} g_i \rangle} \frac{\langle\a_i,
A'_{j_i} g_i \rangle}{\langle\a_i, A_{j_i} g_i \rangle} \frac
{\langle\a_i, A_{j_i} h_i \rangle}{\langle\a_i, A'_{j_i} h_i
\rangle} \frac{\langle\b_i, A'_{j_i} h_i \rangle}{\langle\b_i,
A_{j_i} h_i \rangle}.
\end{eqnarray*}

Since, $g_i, h_i $ remain in a bounded part of the $\DD_{V^t}$ and $\a
_i, \b_i $ in a bounded part of the $\DD_{V^s}$, using Propositions
\ref{cont} and~\ref{cont*}, one gets a constant $C$ such that $\vt
(A_{j_i} g_i, A'_{j_i} g_i ), \vt(A_{j_i} h_i, A'_{j_i} h_i )$, $ \vt
((A_{j_i})^\ast\a_i, (A'_{j_i})^\ast\a_i )$ and $ \vt
((A_{j_i})^\ast\b_i,\break (A'_{j_i})^\ast\b_i) $ are all smaller than
$C\vt(p,p') $. Furthermore, using the contraction of $T_j$ and
$(T_j)^\ast$ (Proposition~\ref{Acones}) we see that
\[
\vt(\a_i, \b_i ) \leq C \tau^i,\qquad
\vt(g_i, h_i ) \leq C\tau^{k-i}.
\]
Moreover, all products in the formula are approximations of $\langle\a
, f_\infty\rangle$ and thus are uniformly bounded away from $0$. It
follows that for $i \leq k/2$,
\begin{eqnarray*}
&&\biggl| \ln\frac{ \langle(A_{j_i})^\ast\b_i, g_i \rangle\langle
(A'_{j_i})^\ast\b_i, h_i \rangle} { \langle(A'_{j_i})^\ast\b_i
, g_i \rangle\langle(A_{j_i})^\ast\b_i, h_i \rangle} \biggr| ,\\
&&\qquad \biggl| \ln\frac{
\langle(A'_{j_i})^\ast\a_i, g_i \rangle
\langle(A_{j_i})^\ast\a_i, h_i \rangle} { \langle(A_{j_i})^\ast
\a_i, g_i \rangle\langle(A'_{j_i})^\ast\a_i, h_i \rangle
} \biggr| \leq C \tau^{k-i }
\vt\bigl(p,p'\bigr)
\end{eqnarray*}
and for $i > k/2$,
\[
\biggl| \ln\frac{ \langle\b_i, A_{j_i} g_i \rangle\langle\a_i,
A'_{j_i} g_i \rangle} { \langle\b_i, A'_{j_i} g_i \rangle
\langle\a_i, A_{j_i} g_i \rangle} \biggr|, \biggl| \ln\frac{
\langle\a_i, A_{j_i}h_i \rangle\langle\b_i, A'_{j_i}h_i \rangle
} { \langle\a_i, A'_{j_i} h_i \rangle\langle\b_i, A_{j_i}h_i
\rangle} \biggr| \leq C \tau^{i }
\vt\bigl(p,p'\bigr),
\]
so that finally the main term of (\ref{long}) is estimated by
\[
\sum_{i=0}^{[k/2]} C\tau^{k-i} \vt
\bigl(p,p'\bigr) + \sum_{i=[k/2]+1}^{k-1}
C \tau^{i} \vt\bigl(p,p'\bigr) \leq C
\tau^{k/2} \vt\bigl(p,p'\bigr) \leq C \rho_0^{n/2}
\vt\bigl(p,p'\bigr).
\]
Claim~\ref{claim2} is proven (recall that $\k< -\frac{\ln\rho_0}{2\ln a} $ so
that $\rho^{1/2} < a^{-\k}$).
\end{pf}

\section{\texorpdfstring{Markov coding and regularity of $p^\infty$}{Markov coding and regularity of p infinity}}

In this section, we discuss the regularity of the mapping $p \mapsto
p^\infty$ from $\M(F)$ into the space $\G_\k^\ast$ of continuous
linear forms on $\G_\k$. By Theorem~\ref{stat}, $p^\infty$ is the
only $p$-stationary measure for the action of $\partial G$, and thus
depends continuously on $p$. In the case of the free group, $p^\infty$
appears as the eigenform for an isolated maximal eigenvalue of an
operator on $\G_\k$ (see~\cite{Le}, Chapter 4c)
and therefore depends real analytically on $p$. This argument does not
seem to work in all the generality of a hyperbolic group, and we are
going to use the Markov representation of the boundary which was
described by M.~Coornaert and A. Papadopoulos in~\cite{CP}.

\subsection{Markov coding}\label{Markov}

Following~\cite{CP}, we call \textit{horofunctions} any integer valued
function on $G$ such that, for all $\l\leq h(x)$, the distance of a
point $x$ to the set $h^{-1} (\l) $ is given by $h(x) - \l$. Two
horofunctions are said to be equivalent if they differ by a constant.
Let $\Phi_0 $ be the set of classes of horofunctions. Equipped with
the topology of uniform convergence on finite subsets of $G$, the space
$\Phi_0$ is a compact metric space. $G$ acts naturally on $\Phi_0$.
The Busemann boundary $\partial_B G $ is a $G$-invariant subset of
$\Phi_0$. For each horofunction $h$, sequences $\{x_n\}_{n\geq1}$
such that
\[
d(x_n, x_{n+1}) = h(x_n) - h
(x_{n+1} ) = 1
\]
converge to a common point in $\partial G$, the point at infinity of
$h$. Two equivalent horofunctions have the same point at infinity. The
mapping $\pi\dvtx\Phi_0 \to\partial G$ which associates to a class of
horofunctions its point at infinity is continuous, surjective,
$G$-equivariant and uniformly finite-to-one.
Fix an arbitrary total order relation on the set of generators $S$.
Define a map $\a\dvtx\Phi_0 \to\Phi_0$ by setting, for a class $ \v
=[h ] \in\Phi_0$, $ \a(\v) = a^{-1} \v$, where $a = a(\v) $ is
the smallest element in $S$ satisfying $h(e) - h(a) = 1$. In~\cite{CP}
is proven:
%
%
\begin{theo}[(\cite{CP})]\label{subshift} The dynamical system $(\Phi_0,
\a)$ is topologically conjugate to a subshift of finite type.
\end{theo}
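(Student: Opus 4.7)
The plan is to code each horofunction class $\varphi \in \Phi_0$ by the sequence of generators produced by iterating $\alpha$. Define $\Psi : \Phi_0 \to S^{\N}$ by $\Psi(\varphi) = (a_0, a_1, a_2, \ldots)$ where $a_i := a(\alpha^i \varphi)$, so that $\Psi \circ \alpha = \sigma \circ \Psi$ for $\sigma$ the one-sided left shift on $S^{\N}$. It then suffices to prove that (i) $\Psi$ is a homeomorphism from $\Phi_0$ onto its image, and (ii) this image is a subshift of finite type in $S^{\N}$.

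For (i), continuity is immediate: $a(\varphi)$ is determined by the finite collection of integers $\{h(e) - h(s) : s \in S\}$ for any representative $h$, and these depend continuously on $\varphi$ in the topology of uniform convergence on finite sets. Injectivity uses the horofunction property directly. Given $\Psi(\varphi) = (a_i)$, the partial products $x_n := a_0 a_1 \cdots a_{n-1}$ satisfy $h(x_n) = h(e) - n$ and $d(x_n, x_{n+1}) = 1$, so they trace a geodesic ray along which $h$ decreases by one at every step. The defining property of a horofunction then forces $h(y) = h(x_n) + d(y, x_n)$ for all $n \geq h(e) - h(y)$, which recovers $h$ from the sequence $(a_i)$ up to the additive constant normalized away by passing to the class. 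Since $\Phi_0$ is compact and $\Psi$ is continuous and injective, $\Psi$ is a homeomorphism onto a closed, shift-invariant subset of $S^{\N}$.

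The heart of the statement is (ii), where hyperbolicity enters through a finite cone type mechanism \emph{à la} Cannon. One introduces an appropriate notion of cone type for a horofunction prefix $w = a_0 \cdots a_{n-1}$ that records both the local geometry of the Cayley graph near $x_n$ and the constraints on horofunction values forced by having $h(x_n) = h(e)-n$. A $\delta$-hyperbolicity argument shows that two prefixes whose terminal neighbourhoods agree within a bounded radius (comparable to $\delta$) produce the same set of admissible continuations, so only finitely many cone types occur. Consequently, whether a word $a_0 \cdots a_n$ lies in the language of $\Psi(\Phi_0)$ is determined by its successive length-$N$ subwords, for some fixed $N = N(\delta, S)$. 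After a standard block recoding, $\Psi(\Phi_0)$ becomes a one-step subshift defined by finitely many forbidden transitions.

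The principal obstacle is precisely the finite memory property used in (ii): proving that admissibility of extensions is governed by only a bounded portion of the prefix. This is the genuinely hyperbolic input and requires careful use of slimness of geodesic triangles to match the terminal geometry of two prefixes with their sets of admissible continuations. The remainder of the proof (setting up the symbolic map and checking it is a topological conjugacy) is formal once this finite-memory statement is established.
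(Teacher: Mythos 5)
This theorem is stated in the paper as a citation of Coornaert--Papadopoulos \cite{CP}; the paper supplies no proof of its own, so the comparison is with the argument in \cite{CP}. Your sketch deviates from that argument at the point where it matters, and the deviation introduces a genuine gap. You code a class $\varphi\in\Phi_0$ by the bare sequence of generators $(a_i)$ along the gradient ray $x_n=a_0\cdots a_{n-1}$, and you justify injectivity by claiming that the horofunction property forces $h(y)=h(x_n)+d(y,x_n)$ for $n$ large. That implication is false: the defining property says that for each level $\lambda\leq h(y)$ there exists \emph{some} point of $h^{-1}(\lambda)$ at distance $h(y)-\lambda$ from $y$; it does not say that the particular point $x_n\in h^{-1}(h(e)-n)$ selected by your tie-breaking rule realizes that distance. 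In a general hyperbolic group one only gets $h(y)=h(x_n)+d(y,x_n)+O(\delta)$, and distinct horofunctions (recall that $\pi:\Phi_0\to\partial G$ is finite-to-one but not injective) can share a gradient ray. This is exactly why \cite{CP} do not use the alphabet $S$: their symbols record the restriction of $h$ to a ball of radius $R_0$ around the current basepoint (``cocycles of bounded support''), and the statement that such data along the ray determines $h$ is their Lemma 6.5 --- a nontrivial hyperbolicity lemma, which the present paper itself invokes in the proof of Proposition \ref{symbolic}. With your coarser alphabet, injectivity of $\Psi$ is unproven and quite possibly false.

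The second gap you acknowledge yourself: the finite-memory property in step (ii) --- that admissibility of a continuation depends only on a bounded terminal window of the prefix --- is asserted ``\emph{à la} Cannon'' but not established. This is where essentially all of the work in \cite{CP} lives (finiteness of the set of local horofunction types, obtained from slimness of triangles), and it interacts with the choice of alphabet: once the symbols carry the $R_0$-local data of $h$, the transition rule becomes genuinely one-step (a Markov condition on consecutive local restrictions), whereas with the alphabet $S$ there is no reason for any finite window to suffice. So the proposal correctly identifies the shape of the argument (code along gradient rays, use hyperbolicity to get finitely many types, recode to a one-step SFT), but the injectivity step as written is wrong and the finite-type step is missing rather than proved.
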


We assume, as we may, that the number $R_0$ used in the construction of
\cite{CP} satisfies $R_0 >r$. In order to fix notation, let $(\Si, \s
) $ be the subshift of finite type of Theorem~\ref{subshift}. That is,
there is a finite alphabet $Z$ and a $Z\times Z$ matrix $A$ with
entries $0$ or $1$ such that $\Si$ is the set of sequences $\underline
z =\{z_n\}_{n \geq0} $ such that for all~$n$, $A_{z_n, z_{n+1}} = 1 $
and $\s$ is the left shift on $\Si$. We can decompose $\Si$ into
transitive components. Namely, there is a partition of the alphabet $Z$
into the disjoint union of $Z_j, j = 0, \ldots, K$ in such a way that
for $j = 1, \ldots, K$, $\Si_j:= \{ \underline z, z_0 \in Z_j \} $
is a $\s$-invariant transitive subshift of finite type and $\bigcup
_{j=1}^K \Si_j $ is the $\omega$-limit set of~$\Si$. By
construction, $G$-invariant closed subsets of $\Si$ are unions of $\Si
_j$ for some $j \in\{1, \ldots, K \}$. We denote such $G$-invariant
subsets by $\Si_J$, where $J$ is the corresponding subset of $\{ 1,
\ldots, K\}$. In particular, the supports of stationary measures on
$\overline G$ are subsets of $\partial_B G$ which are identified with such
$\Si_J$.

For $\chi>0$ consider the space $\G_\chi$ of functions $\phi$ on
$\Si$ such that there is a constant $C_\chi$ with the property that,
if the points $\underline z $ and $\underline z'$ have the same first
$n $ coordinates, then $| \phi(\underline z ) - \phi(\underline z')|
< C_\chi\chi^n. $ For $\phi\in\G_\chi$, denote $\| \phi\|_\chi$
the best constant $C_\chi$ in this definition. The space $\G_\chi$
is a Banach space for the norm $ \| \phi\|:= \| \phi\|_\chi+ \max_{\Si}
|\phi|$. Identifying $\Si$ with $\Phi_0$, we still write
$\pi\dvtx \Si\to\partial G$ the mapping which associates to $\underline
z \in\Si$ the point at infinity of the class of horofunctions
represented by $\underline z$.
%
%
\begin{prop} \label{symbolic} The mapping $\pi\dvtx \Si\to\partial G$
is H\"older continuous.
\end{prop}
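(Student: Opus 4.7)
The plan is to translate proximity in the subshift $\Si$ into proximity on the boundary $\partial G$ by tracking, for each symbolic sequence, a concrete path in $G$ that descends the associated horofunction.

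\smallskip

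First, I would unpack the conjugacy of Theorem \ref{subshift}: to each $\un z \in \Si$ corresponds a horofunction class $[h] \in \Phi_0$, and the shift $\s$ is conjugated to $\a$. Starting from $x_0 = e$, define inductively $x_{n+1} = x_n \cdot s_n$, where $s_n \in S$ is the smallest generator realizing $(x_n^{-1}h)(e) - (x_n^{-1}h)(s_n) = 1$; that is, $s_n$ is the generator selected by $\a$ applied to the horofunction class of $\s^n \un z$. Then $d(x_n, x_{n+1}) = 1 = h(x_n) - h(x_{n+1})$, so by the excerpt's characterization of $\pi$, the sequence $\{x_n\}$ converges in $G \cup \partial G$ to $\pi(\un z)$. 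Moreover $d(e,x_n) = n$ since $h$ is $1$-Lipschitz and $h(x_n) = h(e) - n$.

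\smallskip

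Second, I would observe that the choice of $s_n$ depends only on the horofunction class of $\s^n \un z$, and, through the conjugacy $(\Si,\s) \simeq (\Phi_0,\a)$, only on a fixed finite window of coordinates of $\un z$ centered at position $n$ (the Markov coding in \cite{CP} has bounded memory). Consequently there is a constant $N_0$ such that if $\un z$ and $\un z'$ share the same first $n$ coordinates, then $s_k = s_k'$ for $k = 0, 1, \ldots, n - N_0$, and hence the associated paths satisfy $x_k = x_k'$ for $k \leq n - N_0$. In particular $(x_{n-N_0} \mid x'_{n-N_0})_e = n - N_0$.

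\smallskip

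Third, I would pass to the limit. Since $x_k \to \pi(\un z)$ and $x'_k \to \pi(\un z')$, the definition of the extended Gromov product gives
\[
(\pi(\un z) \mid \pi(\un z'))_e \;\geq\; \liminf_{k,\ell \to \infty} (x_k \mid x'_\ell)_e.
\]
Using the $\d$-hyperbolicity inequality \eqref{hyp} with base point $e$ and inserting the common point $x_{n-N_0} = x'_{n-N_0}$ of word length $n - N_0$, this liminf is at least $n - N_0 - C\d$. The metric estimate $\rho(\xi,\eta) \leq a^{-(\xi\mid\eta)_e + C}$ then yields
\[
\rho(\pi(\un z), \pi(\un z')) \;\leq\; C' a^{-n}.
\]
Equipping $\Si$ with the standard metric $d_\Si(\un z, \un z') = \chi^{N(\un z, \un z')}$ (for any $\chi \in (0,1)$), where $N$ is the first disagreement, this bound reads $\rho(\pi(\un z),\pi(\un z')) \leq C' d_\Si(\un z,\un z')^{\log a / \log(1/\chi)}$, so $\pi$ is H\"older continuous.

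\smallskip

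The main obstacle is the bounded-memory statement in the second step: extracting from the Coornaert--Papadopoulos construction the precise fact that the generator $s_n$ used at the $n$-th step is determined by a bounded window of symbols around $z_n$. This is built into their coding (where the alphabet records local horofunction data at the scale $R_0 > r$), but it is what allows the comparison $x_k = x_k'$ for $k \leq n - N_0$ that drives the whole estimate. Everything after that is purely hyperbolic geometry.
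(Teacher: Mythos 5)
Your proposal is correct and follows essentially the same route as the paper: construct for each $\un z$ the canonical descending path $x_0 = e$, $x_{k+1} = x_k s_k$, use the locality of the Coornaert--Papadopoulos coding to conclude that agreement of the first $n$ symbols forces the two paths to coincide on an initial segment of length comparable to $n$ (the paper gets this from \cite{CP}, Lemma 6.5, which even gives agreement up to $n+L_0$), and then bound the Gromov product $(\pi(\un z)\mid\pi(\un z'))_e$ from below by the length of the common segment. The step you flag as the main obstacle is exactly the one the paper disposes of by citing Lemma 6.5 of \cite{CP}, so no new idea is missing.
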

\begin{pf} Let $\underline z$ and $\underline{z'}$ be two elements of
$\Si$ such that $z_i = z'_i $ for $1\leq i \leq n$. Denote $h$ and
$h'$ the corresponding horofunctions with $h(e) = h'(e) = 0$. Let $\{
x_n \}_{n\geq0} $ be define inductively such that $x_0 = e$ and
$(x_{n-1})^{-1} x_n $ is the smallest element $a $ in $S$ such that
$h(x_{n-1}) - h(x_{n-1} a) = 1$. The sequence $\{x_n \}_{n\geq0} $ is
a geodesic and converges to $\pi(\underline z) $. By~\cite{CP}, Lemma
6.5, $h $ and $h' $ coincide on $N( \{x_0, \ldots, x_{n +L_0} \}, R_0
)$, where $L_0$ and $R_0$ has been chosen as in~\cite{CP}, page 439.
In particular, if one associates $\{x'_n \}_{n\geq0} $ similarly to
$h'$, the sequence $\{x'_n \}_{n\geq0} $ is a geodesic which converges
to $\pi(\underline z') $, and we have $x_k = x'_k$ for $0 \leq k \leq
n + L_0$. It follows that for all $m,m' > n +L_0$,
\[
(x_m, x_{m'})_e = n+ L_0 +
(x_m, x_{m'} )_{x_{n+L_0}} \geq n+ L_0
\geq n.
\]
Therefore
\[
\bigl(\pi(\underline z), \pi\bigl(\underline{z'} \bigr)\bigr)_e \geq\liminf
_{m,m'} (x_m, x_{m'})_e \geq n\quad \mbox{and} \quad
\rho\bigl(\pi(\underline z), \pi
\bigl(\underline{z'} \bigr)\bigr) \leq e^{-an +c_1}.
\]
\upqed\end{pf}
In the same way, we have:
%
%
\begin{prop}\label{regularityh} Let $x$ be fixed in $G$ with $|x| <
R_0$. Then the mapping $\underline z \mapsto h_{\underline z} (x) $
depends only on
the first coordinate in $\Si$, where $h_{\underline z} $ is the horofunction
representing $\underline z$ in Theorem~\ref{subshift}.
\end{prop}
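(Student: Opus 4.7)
The plan is to obtain Proposition \ref{regularityh} as a direct consequence of the localization property of the Coornaert--Papadopoulos coding that was already invoked in the proof of Proposition \ref{symbolic}, namely Lemma 6.5 of \cite{CP}. No new machinery is required: the statement is a base-case refinement of what Proposition \ref{symbolic} extracted from the same tool.

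Given $\un z, \un{z'} \in \Si$ with $z_0 = z'_0$, I would consider the normalized representatives $h_{\un z}$ and $h_{\un{z'}}$ chosen so that $h_{\un z}(e) = h_{\un{z'}}(e) = 0$. As recalled in the proof of Proposition \ref{symbolic}, Lemma 6.5 of \cite{CP} asserts that whenever $\un z$ and $\un{z'}$ share their first $n$ symbols, the horofunctions $h_{\un z}$ and $h_{\un{z'}}$ coincide on the $R_0$-neighborhood $N(\{x_0, x_1, \dots, x_{n+L_0}\}, R_0)$ of the initial segment $x_0 = e, x_1, \dots, x_{n+L_0}$ of the canonical geodesic ray associated with $h_{\un z}$.

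Applying this with the smallest possible $n$, so that only agreement of $z_0$ is required, one obtains $h_{\un z} \equiv h_{\un{z'}}$ on at least $N(\{e\}, R_0) = \{y \in G : |y| \leq R_0\}$. Since the hypothesis $|x| < R_0$ places $x$ in this neighborhood, the equality $h_{\un z}(x) = h_{\un{z'}}(x)$ follows, and this value depends only on the common symbol $z_0$.

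The only point that deserves attention is that the base case of Lemma 6.5, which corresponds to agreement of the single initial symbol, already guarantees coincidence on the entire ball of radius $R_0$ around $e$. This is essentially built into the design of the Markov partition used in \cite{CP}: the alphabet $Z$ is constructed precisely so that the first symbol records the relevant local structure of a horofunction on an $R_0$-neighborhood of the identity, which is exactly why the constant $R_0$ (assumed larger than $r$ in subsection \ref{Markov}) appears in the inductive statement. Once this is unpacked from the notation of \cite{CP}, there is no substantive obstacle.
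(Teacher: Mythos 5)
Your proposal is correct and is essentially identical to the paper's own proof, which likewise invokes the localization property from Lemma 6.5 of \cite{CP} (already used for Proposition \ref{symbolic}) in the base case $n=0$ to conclude that $h$ and $h'$ coincide on $N(e,R_0)\supset\{x\}$ whenever $z_0=z'_0$. Your extra remark about why the single-symbol case already controls the $R_0$-ball around $e$ is exactly the point the paper leaves implicit.
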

\begin{pf} As above, if $z_0 = z'_0 $ and $h,h'$ are the corresponding
horofunctions with $h(e) = h'(e) = 0 $, $h$ and $h'$ coincide on $N(e,
R_0) \supset\{x\} $.
\end{pf}

Let $\nu$ be a stationary probability measure on $\Phi_0$. By
equivariance of $\pi$, the measure $\pi_\ast\nu$ is stationary on
$\partial G$ and, by Theorem~\ref{stat}, we have $\pi_\ast\nu=
p^\infty$. Actually, there is a more precise result:
%
%
\begin{prop}Let $\nu$ be a stationary measure on $\Phi_0$. Then, for
$\nu$-a.e. $\v\in\Phi_0$, all $x$,
%
%
\begin{equation}
\label{density2} \frac{dx_\ast\nu}{d\nu}(\v) = K_{\pi(\v)} (x).
\end{equation}
\end{prop}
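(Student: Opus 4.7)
The plan is to view $c_x(\varphi) := \frac{dx_\ast \nu}{d\nu}(\varphi)$, for fixed $\varphi$, as a positive $p$-harmonic function of $x$ on $G$, and use the minimality of Martin kernels for hyperbolic groups (Ancona) to identify it with $K_{\pi(\varphi)}$. First, stationarity $\sum_y p(y)\, y_\ast \nu = \nu$ shows $y_\ast \nu$ is absolutely continuous with respect to $\nu$ for $y \in F$, hence for every $y \in G = \bigcup_n F^n$, so $c_x$ is well-defined and positive $\nu$-a.e.\ for each $x \in G$. The cocycle identity $c_{xy}(\varphi) = c_y(x^{-1}\varphi)\, c_x(\varphi)$ combined with $\sum_y p(y)\, c_y \equiv 1$ (also from stationarity) shows that $x \mapsto c_x(\varphi)$ is $p$-harmonic on $G$ with $c_e(\varphi) = 1$. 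By Ancona's theorem, $\partial G$ is the minimal Martin boundary for $(G,p)$, so the Martin representation theorem yields for $\nu$-a.e.\ $\varphi$ a unique probability measure $\mu_\varphi$ on $\partial G$ with
\begin{equation*}
c_x(\varphi) \; = \; \int_{\partial G} K_\xi(x)\, d\mu_\varphi(\xi) \quad \text{for all } x \in G,
\end{equation*}
the total mass being $1$ since $K_\xi(e) \equiv 1$.

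Next I would identify $\mu_\varphi$ via the conditional expectation of $c_x$ given $\pi$. Equivariance of $\pi$ and stationarity of $\nu$ imply that $\pi_\ast \nu$ is stationary on $\partial G$, hence equals $p^\infty$ by Theorem \ref{stat}. Then for any bounded measurable $g$ on $\partial G$,
\begin{equation*}
\int g(\pi(\varphi))\, c_x(\varphi)\, d\nu(\varphi) \; = \; \int g(\pi(x\varphi))\, d\nu(\varphi) \; = \; \int g(x\xi)\, dp^\infty(\xi) \; = \; \int g(\xi)\, K_\xi(x)\, dp^\infty(\xi),
\end{equation*}
using (\ref{density1}) in the last step. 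Since $\pi_\ast \nu = p^\infty$, the right side also equals $\int g(\pi(\varphi))\, K_{\pi(\varphi)}(x)\, d\nu(\varphi)$, so $\E_\nu[c_x \mid \pi](\varphi) = K_{\pi(\varphi)}(x)$. Disintegrating $\nu = \int \nu_\eta\, dp^\infty(\eta)$ over $\pi$ and setting $\bar\mu_\eta := \int \mu_\varphi\, d\nu_\eta(\varphi)$, this identity becomes
\begin{equation*}
\int K_\xi(x)\, d\bar\mu_\eta(\xi) \; = \; K_\eta(x) \quad \text{for $p^\infty$-a.e.\ $\eta$ and every $x$.}
\end{equation*}

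Finally, uniqueness in the Martin representation (the essential content of minimality in Ancona's theorem) forces $\bar\mu_\eta = \delta_\eta$ for $p^\infty$-a.e.\ $\eta$. Because $\delta_\eta$ is an extreme point of the simplex of probability measures on $\partial G$, the identity $\delta_\eta = \int \mu_\varphi\, d\nu_\eta(\varphi)$ forces $\mu_\varphi = \delta_\eta$ for $\nu_\eta$-a.e.\ $\varphi$, i.e.\ $\mu_\varphi = \delta_{\pi(\varphi)}$ for $\nu$-a.e.\ $\varphi$. Intersecting the countably many conull sets indexed by $x \in G$ on which the Martin representation and the conditional identity hold, we conclude that $c_x(\varphi) = K_{\pi(\varphi)}(x)$ for $\nu$-a.e.\ $\varphi$ and every $x$. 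The delicate ingredient is the minimality of Martin kernels for hyperbolic groups; everything else is formal manipulation of disintegrations, cocycles, and the description of $\pi_\ast \nu$.
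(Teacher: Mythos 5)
Your proof is correct, but it takes a genuinely different route from the paper's. The paper disintegrates $\nu$ over the $G$-equivariant, uniformly finite-to-one map $\pi:\Phi_0\to\partial G$, writing $\nu=\int\bigl(\sum_{\varphi:\pi(\varphi)=\xi}a(\varphi)\delta_\varphi\bigr)dp^\infty(\xi)$, and then invokes the fact that $(\partial G,p^\infty)$ is the \emph{Poisson} boundary (\cite{K}, Theorem 7.6, together with \cite{KV}, Theorem 3.2) to force the conditional weights to be $G$-invariant, $a(x\varphi)=a(\varphi)$; the density formula then falls out of (\ref{density1}) in one line. You instead read the Radon--Nikodym cocycle $c_x(\varphi)$ as a family of positive $p$-harmonic functions normalized at $e$ and use the \emph{Martin} boundary side of Ancona's theorem --- minimality of every $K_\xi$ and uniqueness of the representing measure --- plus a barycenter/extreme-point argument to pin the representing measure down to $\delta_{\pi(\varphi)}$. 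The trade-off: the paper's argument is shorter but leans on the finite-to-one structure of the horofunction coding and on the measure-theoretic (entropy/Poisson) machinery of \cite{KV}; yours is purely potential-theoretic, never uses that $\pi$ is finite-to-one, and would apply verbatim to any stationary measure on any compact $G$-space carrying a measurable equivariant map to $\partial G$. Two small points you should make explicit if writing this up: the measurability of $\varphi\mapsto\mu_\varphi$ (needed to form $\bar\mu_\eta$), which follows from uniqueness of the Martin representing measure and joint measurability of $(x,\varphi)\mapsto c_x(\varphi)$; and the fact that harmonicity of $x\mapsto c_x(\varphi)$ can be read off directly from $x_\ast\nu=\sum_y p(y)(xy)_\ast\nu$ without passing through the cocycle identity.
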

\begin{pf}
Since the mapping $\pi\dvtx \Phi_0 \to\partial F$ is $G$-equivariant and
finite-to-one, the measure $\nu$ can be written as
\[
\int\psi(\v) \,d\nu(\v) = \int\biggl(\sum_{\v\dvtx\pi(\v) =
\xi}
\psi(\v) a(\v) \biggr) \,dp^\infty(\xi),
\]
where $a$ is a nonnegative measurable function on $\Phi_0 $ such that
$\sum_{\v\dvtx\pi(\v) = \xi} a(\v) = 1$ for $p^\infty$-a.e. $\xi$.
Moreover, since $\partial G$ is a Poisson boundary for the random walk
(\cite{K}, Theorem 7.6), the conditional measures $a(\v) $ has to
satisfy $a(x\v) = a(\v)$ $p^\infty$-a.s. (\cite{KV}, Theorem 3.2).
Formula (\ref{density2}) for the density then follows from formula
(\ref{density1}).
\end{pf}
Identifying $\Phi_0 $ with $\Si$, we see that, for $\underline z \in
\Si, $ $\s^{-1} \underline z$ is given by some $a\underline z$, where
$a$ is one of the generators. We can describe the restriction of a
stationary measure to $\Si_j$. More precisely, we have:
%
%
\begin{prop}\label{thermo} For each $j = 1, \ldots, K$, there is a
unique probability measure $\nu_j$ such that any $p$-stationary
measure on $\Si$ has the restriction to $\Si_j $ proportional to $\nu
_j$. Moreover, for all $p \in\M(F)$, there exist $\chi>0$ and a
neighborhood $\O_p$ of $p$ in $\M(F)$ such that the mapping $p
\mapsto\nu_j $ is Lipschitz continuous from $\O_p$ to $\G_\chi^\ast(\Si_j)$.
\end{prop}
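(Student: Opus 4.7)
The plan is to realise each $\nu_j$ as the unique (up to scalar) left eigenmeasure, for eigenvalue $1$, of a transfer operator $\LL_p$ acting on $\G_\chi(\Si_j)$, with a H\"older potential built from the Martin kernel; the Lipschitz dependence would then follow from spectral perturbation theory applied to this operator. First, I would define on $\Si_j$
$$\psi_p(\un w) \; := \; \log p(a(\un w))\, +\, \log K_{\pi(\un w)}(a(\un w)),$$
where $a(\un w)\in S$ is the generator associated with the first coordinate of $\un w$ through the Coornaert--Papadopoulos coding. By Proposition \ref{LipMartin}, $\xi\mapsto-\log K_\xi(x)$ lies in $\G_\k(\partial G)$ with norm Lipschitz in $p$; by Proposition \ref{symbolic} the map $\pi:\Si\to\partial G$ is H\"older; and by Proposition \ref{regularityh} the generator $a(\un w)$ depends only on the first coordinate. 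Composing yields a constant $\chi\in(0,1)$ and a neighbourhood $\O_p$ of $p$ such that $p'\mapsto\psi_{p'}$ is Lipschitz from $\O_p$ into $\G_\chi(\Si_j)$.

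Next I would set $\LL_p\phi(\un z):=\sum_{\s\un w=\un z,\,\un w\in\Si_j}e^{\psi_p(\un w)}\phi(\un w)$. Since on each cylinder $[z_0]$ the shift $\s$ acts as left multiplication by $a(z_0)^{-1}$, the Jacobian of $\s$ with respect to a stationary measure $\nu$ is given on $[z_0]$ by formula (\ref{density2}) as $K_{\pi(\cdot)}(a(z_0)^{-1})$; reorganising shift preimages according to the generator they produce and invoking the Ancona identity $\sum_{a\in F}p(a)K_\xi(a)=1$ yields both $\LL_p\mathbf{1}=\mathbf{1}$ and the equivalence between $G$-stationarity of $\nu$ and the eigenmeasure relation $\LL_p^\ast\nu=\nu$. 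Because $\Si_j$ is a topologically transitive subshift of finite type and $\psi_p\in\G_\chi(\Si_j)$, Ruelle's Perron--Frobenius theorem then produces a simple, isolated maximal eigenvalue of $\LL_p$ on $\G_\chi(\Si_j)$, equal to $1$ by the above normalisation, together with a unique positive eigenfunction $h_p\in\G_\chi(\Si_j)$ and a unique left probability eigenmeasure $\nu_j$; this gives the existence, uniqueness and a spectral gap on $\G_\chi(\Si_j)$.

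The Lipschitz continuity of $p\mapsto\nu_j$ then follows from standard analytic perturbation theory for operators with spectral gap (of Kato type): the first step gives Lipschitz dependence of $\LL_p$ on $p$ as a bounded operator on $\G_\chi(\Si_j)$, and the spectral gap is stable under such perturbations, so the spectral projector onto the $1$-eigenline, and hence the eigenmeasure viewed in $\G_\chi^\ast(\Si_j)$, depend Lipschitz continuously on $p$ in $\O_p$. The hard step will be the identification of stationarity with $\LL_p^\ast\nu=\nu$: one must match the preimages of the shift, which are parametrised by the coding alphabet and hence by generators in $S$, with the sum over $a\in F$ appearing in $G$-stationarity, and verify that the resulting normalised transfer operator captures exactly the $p$-stationary measures on $\Si_j$. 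The precise combinatorial structure of the Coornaert--Papadopoulos coding, together with the locality provided by Proposition \ref{regularityh}, will be essential for this bookkeeping.
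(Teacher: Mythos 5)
Your overall architecture is the same as the paper's: realise the restriction to each transitive component $\Si_j$ as the unique eigenmeasure of a Ruelle transfer operator whose potential is built from the Martin kernel, get uniqueness from the Ruelle--Perron--Frobenius theorem, and get Lipschitz dependence by combining the Lipschitz regularity of the potential (Propositions \ref{LipMartin} and \ref{symbolic}) with perturbation theory for the operator (the paper invokes real analyticity of $\phi\mapsto N_\phi$ rather than Kato-type arguments, but this is a cosmetic difference). However, the step you yourself flag as hard --- the claimed \emph{equivalence} between $p$-stationarity of $\nu$ and $\LL_p^\ast\nu=\nu$, proved via the normalisation $\LL_p\mathbf{1}=\mathbf{1}$ and the identity $\sum_{a\in F}p(a)K_\xi(a)=1$ --- does not work as proposed, and your potential is not the right one. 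The $\s$-preimages of $\un z$ are \emph{not} parametrised by all of $F$, nor even by all of $S$: the Coornaert--Papadopoulos map selects, for each horofunction, the \emph{smallest} generator $a$ with $h(e)-h(a)=1$, so $a\un z$ is a preimage of $\un z$ only for those $a$ that happen to be the selected letter of $a\un z$ (and $F$ may be strictly larger than $S$ in any case). Hence the sum over $\s^{-1}\un z$ cannot be matched with the harmonicity sum over $F$, $\LL_p\mathbf{1}=\mathbf{1}$ fails for your potential, and the extra $\log p(a(\un w))$ term has no counterpart in the Jacobian of the deterministic map $\s$.

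The paper's route avoids this entirely by only needing one implication. Starting from a stationary measure, formula (\ref{density2}) gives $\frac{dx_\ast\nu}{d\nu}(\v)=K_{\pi(\v)}(x)$ for \emph{every} $x\in G$; applying this to the single group element by which $\s$ acts on each cylinder $[z_0]$ yields the Jacobian identity $\frac{d\s_\ast\nu_j}{d\nu_j}(\un z)=K_{\pi(\un z)}(z_0)$, i.e.\ $\nu_j$ is an eigenmeasure of $\LL_{\phi_p}^\ast$ with $\phi_p(\un z)=\ln K_{\pi(\un z)}(z_0)$ (no $\ln p$ term). Uniqueness of the probability eigenmeasure then forces $\nu_j=N_{\phi_p}$ and $P(\phi_p)=0$ \emph{a posteriori}; one never needs the converse implication nor the normalisation $\LL_{\phi_p}\mathbf{1}=\mathbf{1}$. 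If you restructure your argument this way --- stationarity $\Rightarrow$ Radon--Nikodym identity for each group element $\Rightarrow$ eigenmeasure relation $\Rightarrow$ uniqueness --- the bookkeeping you were worried about disappears, and the rest of your proof (Lipschitz dependence of the potential in $\G_\chi$, hence of the eigenfunctional in $\G_\chi^\ast$) goes through as in the paper.
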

\begin{pf}
Consider a $p$-stationary probability measure on $\Si$ that has a
nonzero restriction to $\Si_j$. Let $\nu_j $ be this (normalized)
restriction. By (\ref{density2}), for all $x$ such that $x^{-1} \Si_j
= \Si_j $, we have $ \frac{dx_\ast\nu_j}{d\nu_j }(\underline z) =
K_{\pi(\underline z)} (x)$. We shall show that there is a unique probability
measure on $\Si_j$ satisfying $ \frac{d\s_\ast\nu_j}{d\nu_j }
(\underline z) = K_{\pi(\underline z)} (z_0) $ and that
it depends Lipschitz continuously on $p$ as an element of $\G_\chi^\ast
$ for some suitable $\chi$.

We use thermodynamical formalism on the transitive subshift of finite
type $\Si_j$. For $\chi< 1$ and $\phi\in\G_\chi$ with real
values, we define the transfer operator $\LL_\phi$ on $\G_\chi(\Si_j)$ by
\[
\LL_\phi\psi(\xi):= \sum_{\eta\in\s^{-1}\xi}
e^{\phi
(\eta) } \psi(\eta).
\]
Then, $\LL_\phi$ is a bounded operator in $\G_\chi$. Ruelle's
transfer operator theorem (see~\cite{Bo}, Theorem\vadjust{\goodbreak} 1.7, and~\cite{R},
Proposition 5.24) applies to $\LL_\phi$, and there exists a number
$P(\phi) $ and a linear functional $N_\phi$ on $\G_\chi$ such
that
the operator $\LL_\phi^\ast$ on $(\G_\chi)^\ast$ satisfies $\LL_\phi
^\ast N_\phi= e^{P(\phi)} N_\phi$. The functional $N_\phi$
extends to a probability measure on $\Si_j $ and is the only
eigenvector of $\LL^\ast_\phi$ with that property.
Moreover, $\phi\mapsto\LL_\phi$ is a real analytic map from $\G_\chi$
to the space of linear operators on $\G_\chi$ (\cite{R},
page 91). Consequently, the mapping $\phi\mapsto N_\phi$ is real
analytic\vspace*{1pt} from $\G_\chi$ into the dual space $\G_\chi^\ast$; see,
for example,~\cite{C}, Corollary 4.6. For $p \in\M(F)$, define $\phi_p
(\underline z) = \ln K_{\pi(\underline z)} (z_0)$. By Propositions
\ref{LipMartin} and~\ref{symbolic}, we can choose $\chi$ such that
the mapping $p \mapsto\phi_p $ is Lipschitz continuous from a
neighborhood $\O_p$ of $p$ in $\M(F)$ into the space $\G_\chi$. It
follows that the mapping $p \mapsto N_{\phi_p} $ is Lipschitz
continuous from $\O_p$ into $\G_\chi^\ast$.

From the relation $ \frac{d\s_\ast\nu_j}{d\nu_j } (\underline z) =
K_{\pi(\underline z)} (z_0) $, we know that $\nu_j $ is invariant
under $\LL_{\phi_p}^\ast$. This shows that $\nu_j $ is the only
probability measure satisfying this relation, that $P(\phi_p) = 0 $
and that $\nu_j $ extends $N_{\phi_p} $.
\end{pf}

Let $\Si_J $ be a minimal, closed $G$-invariant subset of $\Si$. We
know that $\Si_J $ is a finite union of transitive subshifts of finite
type. We have:
%
%
\begin{cor}\label{final} For $p \in\M(F)$, there is a unique
$p$-stationary probability measure $\nu_J (p) $ on $\Si_J$. There is
a $\chi$ and a neighborhood $\O$ of $p$ such that the mapping $p
\mapsto\nu_J(p) $ is Lipschitz continuous from $\O$ into $\G_\chi^\ast
(\Si_J)$.
\end{cor}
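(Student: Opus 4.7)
The plan is to reduce the question about $p$-stationary measures on $\Si_J$ to a finite-dimensional spectral problem by means of Proposition \ref{thermo}. First, the Markov-Kakutani theorem applied to the continuous $G$-averaging action on the compact convex space of probability measures on $\Si_J$ produces at least one $p$-stationary probability measure $\nu_J(p)$; extending it by zero to all of $\Si$ (which remains $p$-stationary because $\Si_J$ is $G$-invariant) and invoking Proposition \ref{thermo} yields
\[
\nu_J(p) \; = \; \sum_{j \in J} c_j(p) \, \nu_j(p), \qquad c_j(p) \geq 0, \quad \sum_{j \in J} c_j(p) = 1,
\]
so the regularity of $\nu_J(p)$ is controlled by that of the weights $c_j(p)$, with $\chi$ and $\O$ chosen common to the finitely many $j \in J$.

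The next step is to derive a finite linear system for the weights. Each $\Si_i$ is the cylinder $\{ \un z : z_0 \in Z_i \}$, hence clopen in $\Si$, and so is $x^{-1}\Si_i$ for each $x \in F$. The indicator $\mathbf{1}_{x^{-1}\Si_i}$ is therefore locally constant (depending on only finitely many coordinates, uniformly in $x \in F$ since $F$ is finite) and lies in $\G_\chi$ with uniformly bounded norm. Testing the stationarity relation $\sum_x p(x) x_* \nu_J(p) = \nu_J(p)$ against $\mathbf{1}_{\Si_i}$ gives
\[
c_i(p) \; = \; \sum_{j \in J} M_{ij}(p) \, c_j(p), \qquad M_{ij}(p) \; := \; \sum_{x \in F} p(x) \, \nu_j(p)(x^{-1}\Si_i).
\]
Summing over $i$ and using $G$-invariance of $\Si_J$ together with $\nu_j(\Si_J) = 1$ shows that $M(p)$ is column-stochastic, while pairing $\nu_j(p) \in \G_\chi^*$ (Lipschitz in $p$ by Proposition \ref{thermo}) against $\mathbf{1}_{x^{-1}\Si_i} \in \G_\chi$ shows that each $M_{ij}(p)$ is Lipschitz in $p$.

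The crucial step, which I expect to be the main obstacle, is to deduce the irreducibility of $M(p)$ from the topological minimality of $\Si_J$. Since $\nu_j$ has full support $\Si_j$, one checks that $M^n_{ij}(p) > 0$ iff there exist $x_1, \dots, x_n \in F$ with $(x_n \cdots x_1)\Si_j \cap \Si_i \neq \emptyset$. If $M(p)$ were reducible, some proper absorbing class $J_1 \subsetneq J$ would satisfy $M_{ij}(p) = 0$ for all $i \in J \setminus J_1$, $j \in J_1$; then $\Si_{J_1} = \bigcup_{j \in J_1} \Si_j$ would be closed, non-empty, and $F$-invariant, hence $G$-invariant since $F$ generates $G$, contradicting the minimality of $\Si_J$. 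Consequently $M(p)$ is irreducible column-stochastic; by Perron-Frobenius $1$ is a simple eigenvalue with a unique strictly positive probability right eigenvector, which yields the uniqueness of $\nu_J(p)$. Standard analytic perturbation theory for a simple eigenvalue of a matrix depending Lipschitz on a parameter then provides Lipschitz dependence of $c(p)$ on $M(p)$, hence on $p$. Combining with Proposition \ref{thermo}, $p \mapsto \nu_J(p) = \sum_j c_j(p) \nu_j(p)$ is Lipschitz from a neighborhood $\O$ of $p$ into $\G_\chi^*(\Si_J)$.
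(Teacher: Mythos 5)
Your proposal is correct and follows the same overall strategy as the paper: decompose any stationary measure on $\Si_J$ as a weighted combination of the component measures $\nu_j$ supplied by Proposition \ref{thermo}, reduce the problem to a finite linear system for the weights with Lipschitz coefficients, and show that system has a unique solution. The two proofs diverge in how they implement this. The paper obtains its linear system by writing the Radon--Nikodym relation (\ref{density2}) for elements $x\in G$ that exchange the components $\Si_k$ within $\Si_J$, and its uniqueness argument is terse: non-uniqueness would give a line of solutions, hence one with $\nu_J(\Si_k)=0$ for some $k$, ``which is impossible.'' You instead test the stationarity equation against the clopen indicators $\mathbf 1_{\Si_i}$, producing an explicit column-stochastic matrix $M(p)$ whose entries are Lipschitz because they pair the Lipschitz functionals $\nu_j(p)\in\G_\chi^\ast$ with fixed locally constant test functions, and you prove uniqueness by showing $M(p)$ is irreducible --- a reducible block would yield a proper closed $F$-invariant, hence $G$-invariant, union of components, contradicting minimality of $\Si_J$ --- so that Perron--Frobenius gives a simple eigenvalue $1$ with a unique positive probability eigenvector, and perturbation of a simple eigenvalue gives the Lipschitz dependence. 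Your route makes rigorous precisely the step the paper leaves implicit (why the solution is unique and why every component carries positive mass), at the cost of needing that each $\nu_j$ has full support in $\Si_j$, which holds since it is the Ruelle eigenmeasure of a transitive subshift with H\"older potential. One cosmetic slip: your stated equivalence $M^n_{ij}>0\iff (x_n\cdots x_1)\Si_j\cap\Si_i\neq\emptyset$ ignores the intermediate components visited, but your argument only uses the $n=1$ case, so nothing is affected.
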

\begin{pf} Let $\nu_J$ be a $p$ stationary measure on $\Si_J$. We
know by Proposition~\ref{thermo} that the conditional measures on the
transitive subsubshifts are unique and Lipschitz continuous from $\O$
into $\G_\chi^\ast(\Si_k)$. We have to show that the $\nu_J(\Si_k) $
are well determined and Lipschitz continuous in $p$. Write again
equation (\ref{density2}), but now for elements $x \in G$ that
exchange the $\Si_k $ within $\Si_J$ and write that $\sum_k \nu_J(\Si
_k) = 1 $. We find that the $\nu_J(\Si_k) $ are given by a
system of linear equations. By Propositions~\ref{LipMartin} and
\ref{thermo}, we know that the coefficients of this linear system are
Lipschitz continuous on $\O$. We know that there is a solution, and
that it is unique, since otherwise there would be a whole line of
solutions, in particular one which would give $\nu_J (\Si_k) =0 $ for
some $k$ and this is impossible. Then the unique solution is Lipschitz
continuous.
\end{pf}

\section{\texorpdfstring{Proof of Theorem \protect\ref{main}}{Proof of Theorem 1.1}}

Choose $\chi$ small enough and $\O$ a neighborhood of $p$ in $\M(F)$
such that Proposition~\ref{LipMartin} and Corollary~\ref{final}
apply: the mappings $ p \mapsto\ln K_{\pi(\underline z)} (x) $ and $p
\mapsto\nu_J$ are Lipschitz continuous from $\O$ into, respectively,
$\G_\chi(\Si)$ and $\G_\chi^\ast(\Si_J)$. Then, by definition
(\ref{entr}), the function $p \mapsto h_p(\Si_J, \nu) $ is Lipschitz
continuous on $\O$. By (\ref{entropy}) and (\ref{entropy2}), the
function $h_p$ is the maximum of a finite number of Lipschitz
continuous functions on $\O$; this proves the entropy part of
Theorem~\ref{main}.

For the escape rate part, recall that the Busemann boundary $\partial_B
G$ is made of horofunctions so that it can be identified\vadjust{\goodbreak} with a
$G$-invariant subset of $\Si$. Stationary measures on $\partial_B G$
are therefore convex combinations of the $\nu_{J'}$, where $J'$ are
such that $\nu_{J'} (\partial_B G) = 1$. Formula (\ref{escape}) yields
$\ell_p^S = \max_{J'} \{ \sum_{x \in F} ( \int_{\Si_{J'}} h(x^{-1})
\,d\nu_{J'}(h) )\* p(x) \}$. By Proposition~\ref{regularityh}, for a
fixed $x \in F$ the function $h(x)$ is in $\G_\chi(\Si_j)$ for all
$\chi$. Therefore, Corollary~\ref{final} implies that each one of the
functions $\int_{\Si_{J'}} h(x^{-1}) \,d\nu_{J'}(h)$ is Lipschitz
continuous on $\O$. This achieves the proof of Theorem~\ref{main}
because the function $p\mapsto\ell_p^S$ is also\vspace*{1pt} written as
the maximum of a finite number of Lipschitz continuous functions on
$\O$.



\printaddresses

\end{document}